\documentclass[twocolumn,amsthm,authoryear]{autart}    

\usepackage{graphicx}          
\usepackage{amsmath,amssymb,amsfonts} 
\usepackage{tikz}

\usetikzlibrary{shapes,arrows,positioning}

\tikzset{
	block/.style={
		draw, 
		fill=blue!10, 
		rectangle, 
		minimum height=3em, 
		minimum width=6em
	},
	sum/.style={
		draw, 
		fill=blue!10, 
		circle, minimum size= .1em,
	},
	input/.style={coordinate},
	output/.style={coordinate},
	pinstyle/.style={
		pin edge={to-,thin,black}
	}
}  
\usepackage{natbib}
\usepackage{xcolor, hyperref}

\definecolor{darkblue}{rgb}{0.0,0.0,0.6}
\hypersetup{colorlinks,breaklinks,linkcolor=darkblue,urlcolor=darkblue,anchorcolor=darkblue,citecolor=darkblue}

\theoremstyle{plain}
\newtheorem{theorem}{Theorem}[section]

\newtheorem{corollary}[theorem]{Corollary}
\newtheorem{lemma}[theorem]{Lemma}
\newtheorem{proposition}[theorem]{Proposition}

\theoremstyle{definition}
\newtheorem{definition}[theorem]{Definition}

\newtheorem{problem}[theorem]{Problem}

\theoremstyle{remark}
\newtheorem{remark}[theorem]{Remark}

\makeatletter
\def\endthebibliography{%
	\def\@noitemerr{\@latex@warning{Empty `thebibliography' environment}}%
	\endlist
}
\makeatother

\begin{document}

\begin{frontmatter}

\title{On Control of Drawdown: Robust Invariance and Optimality} 


\thanks[footnoteinfo]{
	This paper is partially supported by National Science and Technology Council with grant: NSTC115-2628-E-007-005-. 
	}

\author[CHHSIEH]{Chung-Han Hsieh }\ead{ch.hsieh@mx.nthu.edu.tw} 

\address[CHHSIEH]{Department of Quantitative Finance, National Tsing Hua University, Hsinchu, Taiwan 30013}  

\begin{keyword}                           
Stochastic Systems; 
Robust Control in Quantitative Finance; 
Robustness;
Drawdown Control.               
\end{keyword}                             

\begin{abstract}  
Mitigating \emph{drawdown}, the decline in wealth from its running peak, presents a canonical problem in path-dependent risk control. 
In this paper, we develop a finite-horizon control framework that enforces a prescribed maximum percentage drawdown limit in multi-asset stochastic systems. 
Our first result is an exact robust-invariance theorem characterizing every control action that preserves a prescribed drawdown limit against all supported returns.
We show that every robustly safe control admits a \emph{drawdown-modulated} form: the product of the current drawdown \emph{cushion} and a feasible \emph{normalized direction}. 
This yields a complete parameterization of robustly drawdown-safe policies.
Additionally, under stagewise-independent returns, we show that optimizing over all robustly safe causal policies reduces to a one-dimensional Bellman recursion and yields an optimal robustly safe state-feedback policy. 
Finally, we characterize the linear time-invariant (LTI) gains satisfying a prescribed drawdown limit and prove that optimal drawdown modulation achieves no lower expected return under the same limit.
Strict expected-return improvement holds for horizons of at least two stages whenever the LTI policy has positive expected one-stage net return.
\end{abstract}

\end{frontmatter}
\endNoHyper 

\section{Introduction}
\label{INTRODUCTION}

\emph{Drawdown} is the decline of wealth from its running maximum and
is therefore inherently path dependent. 
We study finite-horizon control of maximum percentage drawdown in a multi-asset stochastic system. 
Our objective is to characterize and optimize control policies that
enforce a prescribed drawdown limit against every supported
return path.

A natural benchmark is the linear feedback law $u(k)=KV(k)$, under which the risky-asset exposures are fixed fractions of current account value. 
This proportional-to-wealth structure is closely related to the fixed-weight strategies arising in classical mean--variance \cite{Markowitz_1952,Malekpour_Barmish_2012} and Kelly betting formulations; see
\cite{Kelly_1956,maclean2011kelly,hsieh2016kelly}. 
It is also central to the control-based investment models studied in
\cite{calafiore2008multi,Barmish_Primbs_2015,hsieh2023asymptotic,barmish2024jump}.

\subsection{Drawdown Safety and Related Work}

In classical mean--variance analysis, the return--risk pair
consists of expected return and return variance; see
\cite{Markowitz_1952,luenberger2013investment}.
Although tractable and widely used, variance treats favorable and unfavorable deviations symmetrically and does not directly measure losses relative to a running wealth maximum; see \cite{konno1991mean,hsieh2017inefficiency}.
Thus, drawdown-based risk measures and constraints have received
considerable attention; see
\cite{Ismail_2004,chekhlov2004portfolio,chekhlov2005drawdown,
grossman1993optimal,cvitanic1994portfolio,nystrup2019multi, calafiore2013direct}.
Here, we study a discrete-time stochastic control problem under a prescribed drawdown limit that must hold along every supported return path.  
We characterize all control actions that preserve the drawdown limit and optimize expected terminal wealth over the resulting class of robustly safe causal policies.

Our drawdown-modulated control class has a \emph{cushion-based} structure: its exposure is proportional to the amount by which current account value exceeds a floor linked to the running maximum. 
The cushion form itself has continuous-time antecedents. 
For example, \cite{grossman1993optimal} show that the optimal risky
exposure is proportional to wealth above a floor linked to its running
maximum. 
More general continuous-time constructions for drawdown-constrained wealth processes can be found in \cite{cherny2013portfolio,kardaras2017numeraire}.

However, these continuous-time results do not transfer directly to discrete
time. 
In particular, \cite{klass2005grossman} show that the
Grossman--Zhou policy loses its optimality in discrete time, while
\cite{hernandez2023portfolio} characterize the optimal long-run growth
rate under a drawdown constraint through dynamic programming.
These constructions share the same high-level cushion principle, but
not the discrete-time robust control characterization underlying drawdown modulation.
In contrast to these asymptotic results, we consider a finite-horizon
setting. 
Our robust-safety analysis is distribution-free and guarantees the prescribed drawdown limit along every supported return path, and hence with probability
one.

Our approach connects closely to set-invariance methods for constrained control; see \cite{blanchini1999set,blanchini2015set}.
Recent approaches to robust constrained control include implicit invariant-set representations \cite{anevlavis2024controlled} and inner--outer set constructions \cite{comelli2024inner}.
Control barrier functions likewise enforce state constraints through restrictions on control inputs \cite{ames2017control,agrawal2017discrete}.
Here, we exploit the drawdown structure to characterize all robustly safe actions and, factorize the entire safe-action set by the current cushion.
This exact parameterization yields robust invariance to finite-horizon optimal policy synthesis.

\subsection{Contributions}

We establish a necessary and sufficient condition for robust drawdown invariance.
We prove an exact safe-action factorization that parameterizes every robustly safe control as the product of current cushion and a normalized safe action.

Under stagewise-independent returns, this
factorization reduces finite-horizon expected-wealth maximization over all robustly safe policies to a one-dimensional Bellman recursion.
We prove the optimal-value identity and the existence of a measurable optimal state-feedback~policy.

Under a prescribed drawdown limit, we prove that optimal drawdown modulation achieves no lower expected return than any feasible LTI policy.
An explicit sufficient condition for strict expected-return improvement
is also~established.

\emph{Notation.}
We write $\mathbb{R}_+:=[0,\infty)$. 
All vectors are regarded as column vectors, and $z^\top$ denotes the transpose of a vector $z$.
For $z\in\mathbb{R}^m$, the notation $|z|$ denotes the vector of
component-wise absolute values. 
Inequalities between vectors, as well as the operators $\min\{\cdot,\cdot\}$ and $\max\{\cdot,\cdot\}$ when applied to vectors, are understood component-wise. 
The zero and one vectors are denoted by $\mathbf{0}$ and $\mathbf{1}$, respectively, whenever its dimension is clear from context.
For any nonempty compact set $\mathcal{X}\subset\mathbb{R}^m$, define its support function~\cite{beck2017first} by
\[
\sigma_{\mathcal{X}}(z)
:=
\sup_{x\in\mathcal{X}}z^\top x.
\]

\section{Preliminaries}
\label{section: Preliminaries}
Fix a horizon $N \geq 1$.  For $k = 0,1,2,\dots,N$, we let $V(k)$ be the corresponding account value with $V(0) = V_0 >0$.

\begin{definition}[Percentage Drawdown]
For each stage $k = 0,1,2,\dots,N$, define the \emph{running maximum account value} by
$
V_{\max}(k) := \max_{0 \leq i \leq k} V(i).
$
The \emph{percentage drawdown at stage $k$} is 
\[
d(k) := \frac{V_{\max}(k) - V(k)}{V_{\max}(k)} 
\]
and the \emph{maximum percentage drawdown} over the horizon is given by
$$
d^\star  := \max_{0 \leq k \leq N}d(k)
$$
\end{definition}

Since $V_{\max}(k)\geq V_0>0$, these drawdown quantities are
well defined. 
Moreover, $V_{\max}(k)\geq V(k)$ implies that~$d(k)\geq0$ and hence $d^\star \geq0$. 
The upper bound~$d(k)\leq1$ requires $V(k)\geq0$.

\begin{remark}[Other Drawdown Metrics] 
	The analysis in this paper concerns the percentage drawdown, which measures loss relative to the running maximum account value. 
	An alternative is \emph{maximum absolute drawdown}, which measures the largest peak-to-trough loss in monetary units; see~\cite{Ismail_2004}.
\end{remark}

\subsection{Return Model and Support Information}

For each stage $k=0,1,\dots,N-1$, let
$$
X(k) := [ X_1(k) \; \cdots X_m(k) ]^\top \in \mathbb{R}^m
$$
denote the random vector of asset returns during stage~$k$, where $X_i(k)$ is the random return of asset $i$. 
For each $k$, assume that a known nonempty compact set~$\mathcal{X}_k\subset\mathbb{R}^m$ satisfies
$ 	
\mathbb{P}( X(k)\in\mathcal{X}_k) = 1.
$
No temporal independence assumption $\{X(k)\}_{k=0}^{N-1}$ is imposed at this stage; additional distributional assumptions will be introduced only for the
results that require them.

An important special case is the box-support model.
Given
$X_{\min}(k), X_{\max}(k) \in\mathbb{R}^m$ satisfying
$X_{\min}(k) \leq X_{\max}(k)$ componentwise, define
\begin{align*}
	\mathcal{X}_k
	&=
	[X_{\min}(k), \, X_{\max}(k)] \\
	&:=
	\{
	x\in\mathbb{R}^m:
	  x_i \in [X_{\min,i}(k), X_{\max,i}(k)],
	\ i=1,\dots,m
	\}.
\end{align*}
When the support set is time invariant, we write $\mathcal{X}_k = \mathcal{X}$ for all $k$.

\subsection{Account Dynamics and Admissible Policies}
Let $\mathcal{F}_0$ be the trivial $\sigma$-algebra and, for
$k=1,\dots,N$, define
$
\mathcal{F}_k
:=
\sigma\bigl(X(0),\dots,X(k-1)\bigr),
$
which represents the information available \emph{before} the stage-$k$ return is observed.
The control $u(k)\in\mathbb{R}^m$ is required to be $\mathcal{F}_k$-measurable; that is, the stage-$k$ investment may depend on past returns~$X(0), \dots, X(k-1)$ but not on $X(k)$.

Beginning from $V(0)=V_0>0$, the account value evolves according to
\[
V(k+1)
=
V(k)+u(k)^\top X(k)-\varepsilon^\top|u(k)|,
\]
where $\varepsilon\in\mathbb{R}_+^m$ is the vector of per-period proportional cost rates, so that $\varepsilon^\top|u(k)|$ represents a reduced-form proxy for execution costs, including bid--ask spreads and~fees.

\emph{Causal Policy.}
A \emph{causal policy} is a sequence~$\pi := \{ \pi_k\}_{k=0}^{N-1}$
of measurable decision rules~$
\pi_k : \mathcal{X}_0\times\cdots\times\mathcal{X}_{k-1} \to \mathbb{R}^m
$ 
that generate the actual control actions according to 
\begin{align*}
	&u^\pi(0):=\pi_0,\\
	\quad
	&u^\pi(k):=\pi_k\bigl(X(0),\dots,X(k-1)\bigr),
	\quad k=1,\dots,N-1.
\end{align*}

\begin{definition}[Admissible Causal Policy]
A causal policy $\pi= \{ \pi_k\}_{k=0}^{N-1}$
is \emph{admissible} if, for each $k=0,\dots,N-1$ and every state
reachable under $\pi$,
\[
V(k) + u^\pi(k)^\top x-\varepsilon^\top|u^\pi(k)|
\geq 0
\quad
\text{for all }x\in\mathcal{X}_k.
\]
Here, a reachable state is one generated from $V(0)=V_0$ under $\pi$
along some return history with $x(j)\in\mathcal{X}_j$.
We denote the class of all admissible causal policies by~$\Pi$.
\end{definition}

\subsection{Policy Performance}
For each $\pi \in \Pi$, let $V_\pi(k)$ denote the controlled account value by $\pi$, and let $d_\pi(k)$ denote the corresponding percentage drawdown.
Along any sample path~$\{V_\pi(k)\}_{k=0}^N$, we define the pathwise
overall return and maximum percentage drawdown by
\[
R_\pi
:=
\frac{V_\pi(N)-V_0}{V_0},
\qquad
d_\pi^\star 
:=
\max_{0\leq k\leq N}d_\pi(k),
\]
respectively.  
In what follows, we restrict attention to policies $\pi\in\Pi$ for which $R_\pi$ is integrable.

\section{Robust Drawdown-Safe Control}

\subsection{Robust Drawdown Invariance}

Fix a prescribed drawdown limit $d_{\max}\in[0,1)$. 
Using the augmented state $(V(k), V_{\max}(k))$, define the \emph{drawdown-safe set}
\begin{align} \label{eq: drawdown safe set}
	\mathcal{S}_{d_{\max}}
:=
\left\{
	(v,w)\in\mathbb{R}_+^2:
	w>0,\;
	(1-d_{\max})w\leq v\leq w
\right\}.
\end{align}
For each $k=0,1,\dots,N$, since $V_{\max}(k) >0$, the drawdown constraint $d(k) \leq d_{\max}$ is equivalent to 
\begin{align*} 
	(V(k), V_{\max}(k)) \in \mathcal{S}_{d_{\max}}.
\end{align*}
For any state in $\mathcal{S}_{d_{\max}}$, define the \emph{drawdown modulator}~by
\[
M(k)
:=
\frac{d_{\max}-d(k)}{1-d(k)}.
\]

\begin{definition}[Robust Drawdown Safety] \label{definition: robust drawdown safety}
We call a causal policy $\pi$ \emph{robustly drawdown-safe}
(or simply \emph{robustly safe}) if, for every stage
$k=0,\dots,N-1$ and every current state
$(V(k),V_{\max}(k))\in\mathcal{S}_{d_{\max}}$,
the control prescribed by $\pi$ yields
\[
(V(k+1),V_{\max}(k+1))\in\mathcal{S}_{d_{\max}}
\quad
\text{for every }x\in\mathcal{X}_k.
\]
In this case, we say that $\mathcal{S}_{d_{\max}}$ is
\emph{robustly invariant} under policy $\pi$.
\end{definition}

The following theorem gives an exact support-function characterization of robust invariance of
$\mathcal{S}_{d_{\max}}$.

\begin{theorem}[Robust Drawdown Invariance]
\label{thm:robust-drawdown-invariance}
Fix a prescribed drawdown limit $d_{\max}\in[0,1)$ and a stage $k \in \{0,\dots, N-1\}$. 
Suppose that $(V(k),V_{\max}(k))\in\mathcal{S}_{d_{\max}}$. 
For a given control action $u(k)$, the successor state $(V(k+1),V_{\max}(k+1))$ remains in~$\mathcal{S}_{d_{\max}}$ for every $x\in\mathcal{X}_k$ if and only if
\begin{align} \label{ineq:robust-drawdown-invariance}
	\sigma_{\mathcal{X}_k}\bigl(-u(k)\bigr)
+
\varepsilon^\top |u(k)|
\leq
M(k)V(k).
\end{align}
Consequently, a causal policy $\pi$ is robustly drawdown-safe if and only if ~\eqref{ineq:robust-drawdown-invariance} holds at every stage $k=0,\dots,N-1$ and every state in $\mathcal{S}_{d_{\max}}$. 
\end{theorem}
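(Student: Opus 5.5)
The plan is to reduce membership of the successor state in $\mathcal{S}_{d_{\max}}$ to a single scalar inequality, and then take the worst case over $x\in\mathcal{X}_k$. Write $v=V(k)$, $w=V_{\max}(k)$, and $u=u(k)$. For a realized return $x$, set $v^+ = v+u^\top x-\varepsilon^\top|u|$ and $w^+=\max\{w,v^+\}$. The conditions $w^+>0$ and $v^+\le w^+$ hold automatically, because $w^+\ge w>0$ and $w^+\ge v^+$ by construction. So, for the given $x$, the successor lies in $\mathcal{S}_{d_{\max}}$ if and only if $v^+\ge 0$ and $v^+\ge(1-d_{\max})w^+$.

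We then split on which term attains the maximum in $w^+$.
\begin{itemize}
\item If $v^+\ge w$, then $w^+=v^+$. The requirement becomes $v^+\ge(1-d_{\max})v^+$, which holds because $v^+\ge w>0$ and $d_{\max}\ge0$.
\item If $v^+<w$, then $w^+=w$ and the requirement is $v^+\ge(1-d_{\max})w$.
\end{itemize}
In both cases the condition is equivalent to $v^+\ge(1-d_{\max})w$: in the first case this holds since $v^+\ge w\ge(1-d_{\max})w$. Nonnegativity of $v^+$ then follows from $(1-d_{\max})w>0$. Hence, for each fixed $x$, the successor lies in $\mathcal{S}_{d_{\max}}$ if and only if
\[
-u^\top x+\varepsilon^\top|u|\le v-(1-d_{\max})w .
\]

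Next we rewrite the right-hand side in terms of $M(k)$. Since $d(k)=(w-v)/w$, we have $1-d(k)=v/w$, and
\[
M(k)V(k)=\frac{d_{\max}-d(k)}{1-d(k)}\,v=\frac{w(d_{\max}-1)+v}{v}\,v=v-(1-d_{\max})w.
\]
This calculation requires $v>0$. Since the state is in $\mathcal{S}_{d_{\max}}$, we have $v\ge(1-d_{\max})w>0$ because $d_{\max}<1$, so the division is legitimate. This is the one point where $d_{\max}<1$ is essential, and I expect it to be the only delicate step. The rest is bookkeeping.

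Finally, requiring the per-$x$ inequality for every $x\in\mathcal{X}_k$ is equivalent to requiring it for the supremum over $x$ of its left-hand side, that is,
\[
\sup_{x\in\mathcal{X}_k}(-u)^\top x+\varepsilon^\top|u|\le M(k)V(k),
\]
which is exactly \eqref{ineq:robust-drawdown-invariance}, since the supremum is $\sigma_{\mathcal{X}_k}(-u)$. Compactness of $\mathcal{X}_k$ makes the supremum finite and attained, though attainment is not needed for this equivalence.

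The policy-level consequence then follows from Definition~\ref{definition: robust drawdown safety}. Robust safety is, by definition, the stagewise condition applied at every stage and every state in $\mathcal{S}_{d_{\max}}$. Applying the one-step equivalence at each such pair $(k,\text{state})$ gives the claimed characterization.
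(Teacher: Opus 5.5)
Your proof is correct and follows essentially the same route as the paper: you use the case split on $w^+=\max\{w,v^+\}$ to reduce successor membership to $v^+\ge(1-d_{\max})w$, take the supremum over $\mathcal{X}_k$, and identify $v-(1-d_{\max})w$ with $M(k)V(k)$. You also explicitly check that $v^+\ge 0$ (membership in $\mathbb{R}_+^2$) and that $v>0$ (so $M(k)$ is well defined), two small points the paper leaves implicit.
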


\begin{proof}
Fix a state
$
(v,w):=(V(k),V_{\max}(k))
\in\mathcal{S}_{d_{\max}}
$
and a control $u:=u(k)$. 
For a realization
$x\in\mathcal{X}_k$, define the successor state $(v^+, w^+)$ satisfying
\begin{align}
	v^+
	&:= v+u^\top x-\varepsilon^\top |u|, \label{eq: successor account value}
	\\
	w^+
	&:=\max\{w,v^+\}. \label{eq: maximum successor account}
\end{align}
The successor state $(v^+, w^+)$ belongs to $\mathcal{S}_{d_{\max}}$ if and only~if
$
w^+>0 \text{ and }  (1-d_{\max})w^+ \leq v^+ \leq w^+.
$
By the definition $w^+=\max\{w,v^+\}$ in \eqref{eq: maximum successor account}, we have
$w^+\geq w>0$ and $v^+\leq w^+$.
Hence,
$(v^+,w^+)\in\mathcal{S}_{d_{\max}}$ if and only if
\begin{align} \label{ineq: successor state drawdown condition}
	v^+
	\geq 
	(1-d_{\max})w^+.
\end{align}
We claim that this condition is equivalent to
\begin{align} \label{ineq: equivalent successor state drawdown condition}
v^+\geq(1-d_{\max})w.
\end{align}
To establish this, we observe that since $w^+ \geq w$ and $1-d_{\max} \geq 0$, Inequality \eqref{ineq: successor state drawdown condition} immediately implies
\eqref{ineq: equivalent successor state drawdown condition}.

Conversely, suppose that
\eqref{ineq: equivalent successor state drawdown condition} holds.
If $v^+\leq w$, by \eqref{eq: maximum successor account}, then $w^+=w$ and hence
\eqref{ineq: successor state drawdown condition} follows immediately.
On the other hand, if $v^+>w$, by \eqref{eq: maximum successor account} again, we have $w^+=v^+$.  
Since $1-d_{\max}\in (0,1]$, we have
\[
v^+ = w^+
\geq
(1-d_{\max})w^+.
\]
Hence, \eqref{ineq: successor state drawdown condition} holds. Therefore, \eqref{ineq: successor state drawdown condition} and
\eqref{ineq: equivalent successor state drawdown condition} are
equivalent.

Therefore, using \eqref{eq: successor account value} in~\eqref{ineq: equivalent successor state drawdown condition}, robust one-step invariance is equivalent to
\[
v+u^\top x-\varepsilon^\top |u|
\geq
(1-d_{\max})w
\quad
\text{for every }x\in\mathcal{X}_k.
\]
Rearranging and taking the worst case over $\mathcal{X}_k$ gives
\begin{align} \label{ineq: drawdown safety conditon}
\sup_{x\in\mathcal{X}_k}(-u^\top x)
+
\varepsilon^\top |u|
\leq
v-(1-d_{\max})w.
\end{align}
The left-hand side is
$
\sigma_{\mathcal{X}_k}(-u)+\varepsilon^\top |u|.
$

Finally, since
$
v=(1-d(k))w,
$
the right-hand side of~\eqref{ineq: drawdown safety conditon} satisfies
\begin{align*}
v-(1-d_{\max})w
&=
\bigl(d_{\max}-d(k)\bigr)w\\
&=
\frac{d_{\max}-d(k)}{1-d(k)}v
=
M(k)v.
\end{align*}
This proves the one-step equivalence. 
Applying the one-step equivalence at every stage and every
state in $\mathcal{S}_{d_{\max}}$ proves the necessary and
sufficient condition for robust invariance under $\pi$.
\end{proof}

\begin{corollary}[Box-Support Representation]
\label{cor:box-drawdown-invariance}
Suppose that the stage-$k$ support set is the box
$
\mathcal{X}_k =[X_{\min}(k),\, X_{\max}(k)] \subseteq \mathbb{R}^m.
$
Writing
\[
u^+(k):=\max\{u(k), \mathbf{0} \},
\quad
u^-(k):=\min\{u(k), \mathbf{0} \}
\]
componentwise, the robust drawdown-invariance condition~\eqref{ineq:robust-drawdown-invariance} becomes
\[
-X_{\min}(k)^\top u^+(k)
-
X_{\max}(k)^\top u^-(k)
+
\varepsilon^\top |u(k)|
\leq
M(k)V(k).
\]
\end{corollary}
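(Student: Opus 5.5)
The plan is to apply Theorem~\ref{thm:robust-drawdown-invariance} directly. The only task is then to compute the support function $\sigma_{\mathcal{X}_k}(-u(k))$ in closed form when $\mathcal{X}_k$ is a box. The cost term $\varepsilon^\top|u(k)|$ and the right-hand side $M(k)V(k)$ do not change, so the corollary follows once we show
\[
\sigma_{[X_{\min}(k),X_{\max}(k)]}(-u)
=
-X_{\min}(k)^\top u^+ - X_{\max}(k)^\top u^-,
\qquad u=u(k).
\]

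First, we use the product structure of the box. Because $\mathcal{X}_k=\prod_{i=1}^m[X_{\min,i}(k),X_{\max,i}(k)]$ and the objective $-u^\top x=\sum_i(-u_i x_i)$ is separable, the supremum splits into a sum of one-dimensional suprema:
\[
\sigma_{\mathcal{X}_k}(-u)=\sum_{i=1}^m \sup_{x_i\in[X_{\min,i}(k),X_{\max,i}(k)]}(-u_i x_i).
\]
This is legitimate since each coordinate ranges independently over its interval. The supremum is attained because the box is compact and nonempty.

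Second, we evaluate each scalar term by cases on the sign of $u_i$.
\begin{itemize}
\item If $u_i>0$, the map $x_i\mapsto -u_ix_i$ is decreasing, so the supremum is $-u_iX_{\min,i}(k)$. Here $u_i^+=u_i$ and $u_i^-=0$.
\item If $u_i<0$, the map is increasing, so the supremum is $-u_iX_{\max,i}(k)$. Here $u_i^-=u_i$ and $u_i^+=0$.
\item If $u_i=0$, both sides are zero.
\end{itemize}
In every case the scalar term equals $-X_{\min,i}(k)u_i^+ - X_{\max,i}(k)u_i^-$. Summing over $i$ gives the claimed identity.

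Substituting this identity into \eqref{ineq:robust-drawdown-invariance} yields the stated inequality. Nothing here is a real obstacle. The only points needing care are justifying the coordinatewise decomposition of the supremum over a product set, and keeping the sign conventions straight: $u^-\le\mathbf{0}$, so $-X_{\max}(k)^\top u^-$ correctly represents the worst-case loss on short positions.
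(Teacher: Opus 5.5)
Your proposal is correct and follows the same argument as the paper. Both compute each one-dimensional supremum by the sign of $u_i(k)$, sum over the components to obtain $\sigma_{\mathcal{X}_k}(-u(k))$, and substitute into Theorem~\ref{thm:robust-drawdown-invariance}; you additionally justify the coordinatewise splitting over the product set, which the paper leaves implicit.
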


\begin{proof}
For each component,
\begin{align*}
	&\sup_{x_i \in [X_{\min,i}(k), X_{\max,i}(k)]}
	\{-u_i(k)x_i\}\\
	 &\qquad =
	\begin{cases}
	-X_{\min,i}(k)u_i(k), & u_i(k)\geq0,\\[1ex]
	-X_{\max,i}(k)u_i(k), & u_i(k)<0.
	\end{cases}
\end{align*}
Summing over $i=1,\dots,m$ yields
\[
\sigma_{\mathcal{X}_k}\bigl(-u(k)\bigr)
=
-X_{\min}(k)^\top u^+(k)
-
X_{\max}(k)^\top u^-(k).
\]
The result follows from Theorem~\ref{thm:robust-drawdown-invariance}.
Under the cost-free, time-invariant box-support assumptions of
\cite{hsieh2017drawdown}, this resulting inequality reduces to
the condition in the Generalized Drawdown Modulation~Lemma.
\end{proof}

\begin{corollary}[Drawdown Guarantee and Survivability]
\label{cor:drawdown-survivability}
Suppose $X(k)\in\mathcal{X}_k$ with probability one for every $k=0,\dots,N-1$
and let $\pi = \{\pi_k\}_{k=0}^{N-1}$ be a robustly safe causal policy with induced account-value process $\{V(k)\}_{k=0}^{N}$.
Then
$
\mathbb{P}\bigl(d^\star\leq d_{\max}\bigr)=1.
$
Moreover, on the same probability-one event,
\[
V(k)
\geq
(1-d_{\max})V_{\max}(k)
>
0,
\qquad k=0,\dots,N.
\]
In particular,
$
\mathbb{P}(V(k) >0 \text{ for all } k=0,\dots,N) = 1. 
$
\end{corollary}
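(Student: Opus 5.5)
The argument is an induction on $k$ along sample paths, using Theorem~\ref{thm:robust-drawdown-invariance} (or directly Definition~\ref{definition: robust drawdown safety}) as the one-step propagation mechanism. Let
\[
\Omega_0:=\bigcap_{k=0}^{N-1}\{X(k)\in\mathcal{X}_k\}.
\]
This is a finite intersection of probability-one events, so $\mathbb{P}(\Omega_0)=1$. All remaining statements will be proved pointwise on $\Omega_0$.

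The base case is immediate. Since $V(0)=V_{\max}(0)=V_0>0$, we have $d(0)=0\le d_{\max}$, and hence $(V(0),V_{\max}(0))\in\mathcal{S}_{d_{\max}}$.

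For the inductive step, fix $\omega\in\Omega_0$ and suppose $(V(k),V_{\max}(k))\in\mathcal{S}_{d_{\max}}$ for some $k\le N-1$. The control $u^\pi(k)=\pi_k(X(0),\dots,X(k-1))$ is the one prescribed by $\pi$ at this state, and the realized return satisfies $x=X(k)(\omega)\in\mathcal{X}_k$. Robust safety of $\pi$ then gives $(V(k+1),V_{\max}(k+1))\in\mathcal{S}_{d_{\max}}$. Here $V_{\max}(k+1)=\max\{V_{\max}(k),V(k+1)\}$ coincides with the $w^+$ defined in \eqref{eq: maximum successor account}, so the successor state is exactly the one in the definition. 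By induction, the state lies in $\mathcal{S}_{d_{\max}}$ for all $k=0,\dots,N$ on $\Omega_0$.

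Membership in $\mathcal{S}_{d_{\max}}$ is equivalent to $d(k)\le d_{\max}$, as noted after \eqref{eq: drawdown safe set}. Hence $d^\star=\max_k d(k)\le d_{\max}$ on $\Omega_0$, which gives $\mathbb{P}(d^\star\le d_{\max})=1$. The set definition also gives $V(k)\ge(1-d_{\max})V_{\max}(k)$ directly. Strict positivity follows from $d_{\max}<1$ together with $V_{\max}(k)\ge V_0>0$, and this yields the survivability statement.

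The only delicate point is measure-theoretic bookkeeping. Robust safety is a deterministic, pathwise property required only for realizations in $\mathcal{X}_k$, so the argument must be restricted to $\Omega_0$ rather than applied to arbitrary paths. One must also check that the states reached along $\Omega_0$ are exactly the reachable states to which the definition applies. The induction itself handles this, since it only ever applies the definition at states already shown to lie in $\mathcal{S}_{d_{\max}}$.
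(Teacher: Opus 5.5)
Your proposal is correct and follows essentially the same route as the paper. Restrict to the probability-one event $\bigcap_{k=0}^{N-1}\{X(k)\in\mathcal{X}_k\}$, propagate membership in $\mathcal{S}_{d_{\max}}$ from $(V_0,V_0)$ by robust invariance, and read off the drawdown and positivity bounds from the set definition and $V_{\max}(k)\geq V_0$; the only difference is that you write out the induction the paper compresses into one sentence.
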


\begin{proof}
Define an event
$
\Omega_{\mathcal{X}}
:=
\bigcap_{k=0}^{N-1}
\left\{
X(k)\in\mathcal{X}_k
\right\}.
$
Note that
\begin{align*}
	\mathbb{P}(\Omega_{\mathcal{X}}^c) 
	&= \mathbb{P}\left( 
	\cup_{k=0}^{N-1} \{X(k) \notin \mathcal{X}_k\} 
	\right)\\
	&\leq \sum_{k=0}^{N-1} \mathbb{P}(X(k) \notin \mathcal{X}_k) = 0,
\end{align*}
where the last equality follows from the assumed hypothesis that $X(k)\in\mathcal{X}_k$ with probability one for every $k=0,\dots,N-1$; 
hence, we have~$\mathbb{P}(\Omega_{\mathcal{X}})=1$. 
On the event $\Omega_{\mathcal{X}}$, the assumed robust invariance of~$\mathcal{S}_{d_{\max}}$ under policy~$\pi$, together with~$(V_0,V_0)\in\mathcal{S}_{d_{\max}}$, implies
\[
\bigl(V(k),V_{\max}(k)\bigr)
\in
\mathcal{S}_{d_{\max}},
\quad k=0,\dots,N.
\]
The drawdown guarantee and wealth lower bound then follow directly
from the definition of $\mathcal{S}_{d_{\max}}$ and
$V_{\max}(k)\geq V_0$.
Indeed, for every $\omega\in\Omega_{\mathcal{X}}$ and every~$k=0,\dots,N$,
\[
V(k;\omega)
\geq
(1-d_{\max})V_{\max}(k;\omega)
\geq
(1-d_{\max})V_0
>
0.
\]
Hence,
$
\Omega_{\mathcal{X}}
\subseteq
\left\{
V(k)>0\text{ for all }k=0,\dots,N
\right\},
$
and the claim follows from
$\mathbb{P}(\Omega_{\mathcal{X}})=1$.
\end{proof}

\subsection{Exact Safe-Action Factorization}

The robust drawdown-invariance condition in Theorem~\ref{thm:robust-drawdown-invariance} admits an \emph{exact safe-action
factorization.} 
In particular, for each stage $k$, define the \emph{support-risk function} $h_k: \mathbb{R}^m \to \mathbb{R}$ by
\[
h_k(u)
:=
\sigma_{\mathcal{X}_k}(-u)
+
\varepsilon^\top|u|.
\]
Accordingly, Theorem~\ref{thm:robust-drawdown-invariance}
states that, whenever
$\bigl(V(k),V_{\max}(k)\bigr)\in\mathcal{S}_{d_{\max}}$,
a control action $u(k)$ keeps the successor state in
$\mathcal{S}_{d_{\max}}$ for every $x\in\mathcal{X}_k$ if and only if
$
h_k\bigl(u(k)\bigr)\leq C(k),
$
where 
\begin{align}\label{eq:cushion function}
	C(k)
	:=
	M(k)V(k) 
	= 
	V(k)-(1-d_{\max})V_{\max}(k)
\end{align}
is the \emph{drawdown cushion}.
We also define the \emph{normalized safe-action set}
\begin{align} \label{eq: normalized safe-action set}
	\Gamma_k
	:=
	\left\{
	\gamma\in\mathbb{R}^m:
	h_k(\gamma)\leq1
	\right\}.
\end{align}
For $c\geq0$, we write $c\Gamma_k:=\{c\gamma:\gamma\in\Gamma_k\}$
for the scalar multiple of $\Gamma_k$.

\begin{lemma}[Normalized Safe-Action Geometry]
\label{lemma:normalized-safe-action-geometry}
	For each stage $k$, the support-risk function $h_k$ is continuous and positively homogeneous. 
	If
	$
	h_k(u)>0
	$
	for every
	$
	u\in\mathbb{R}^m\setminus\{\mathbf{0}\},
	$
	then the normalized safe-action set $\Gamma_k$ is~compact.
\end{lemma}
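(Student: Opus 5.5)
The proof splits into three steps: continuity, positive homogeneity, and compactness via a minimum of $h_k$ on the unit sphere.

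\emph{Continuity.} I will treat the two terms of $h_k$ separately. The map $u\mapsto\varepsilon^\top|u|$ is a composition of continuous maps, so it is continuous. For the support-function term, set $R:=\max_{x\in\mathcal{X}_k}\|x\|$, which is finite because $\mathcal{X}_k$ is compact and nonempty. For any $u,v$ and any $x\in\mathcal{X}_k$,
\[
-u^\top x\le -v^\top x+\|u-v\|R .
\]
Taking the supremum over $x$ and then swapping the roles of $u$ and $v$ gives
\[
\bigl|\sigma_{\mathcal{X}_k}(-u)-\sigma_{\mathcal{X}_k}(-v)\bigr|\le R\|u-v\| .
\]
So $\sigma_{\mathcal{X}_k}(-\cdot)$ is Lipschitz, hence finite and continuous on all of $\mathbb{R}^m$.

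\emph{Positive homogeneity.} For $c>0$, pulling the positive scalar out of the supremum gives $\sigma_{\mathcal{X}_k}(-cu)=c\,\sigma_{\mathcal{X}_k}(-u)$. Also $|cu|=c|u|$ componentwise. For $c=0$, both sides of $h_k(cu)=c\,h_k(u)$ vanish, since $h_k(\mathbf{0})=0$. Hence $h_k(cu)=c\,h_k(u)$ for all $c\ge0$.

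\emph{Compactness of $\Gamma_k$.} Closedness is immediate: $\Gamma_k=h_k^{-1}((-\infty,1])$ is the preimage of a closed set under a continuous map. The main step is boundedness.

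Let $S:=\{u:\|u\|=1\}$. This set is compact and $h_k$ is continuous, so $h_k$ attains its minimum $\mu$ on $S$. The hypothesis that $h_k>0$ off the origin gives $\mu>0$. For nonzero $\gamma\in\Gamma_k$, positive homogeneity yields
\[
1\ge h_k(\gamma)=\|\gamma\|\,h_k\bigl(\gamma/\|\gamma\|\bigr)\ge \mu\|\gamma\| .
\]
Therefore $\|\gamma\|\le 1/\mu$, so $\Gamma_k$ is bounded. By Heine--Borel, $\Gamma_k$ is compact.

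The only delicate point is that $\mu$ must be strictly positive. Pointwise positivity of $h_k$ alone does not give a uniform bound; the uniform lower bound comes from attaining the minimum on the compact sphere, which is why continuity is proved first.
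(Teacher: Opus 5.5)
Your proof is correct and follows essentially the same route as the paper: positive homogeneity, a strictly positive minimum of $h_k$ on the unit sphere giving $h_k(u)\geq\eta_k\|u\|_2$, and closedness of $\Gamma_k$ by continuity. The only difference is that you prove continuity of $\sigma_{\mathcal{X}_k}$ with an explicit Lipschitz estimate (constant $\max_{x\in\mathcal{X}_k}\|x\|$), whereas the paper simply asserts it from compactness of $\mathcal{X}_k$.
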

\begin{proof}
Since $\mathcal{X}_k$ is compact, its support function $\sigma_{\mathcal{X}_k}$ is continuous; hence $h_k$ is also continuous.
Moreover, for every~$\alpha\geq0$, observe that
\begin{align*}
	h_k(\alpha u)
	&=
	\sigma_{\mathcal{X}_k}(-\alpha u)
	+
	\varepsilon^\top|\alpha u|\\
	&= \alpha (\sigma_{\mathcal{X}_k} (-u) + \varepsilon^\top |u|)
	=\alpha h_k(u),
\end{align*}
so $h_k$ is positively homogeneous.

Since $h_k$ is continuous and strictly positive on the compact unit sphere, the minimum of $h_k$ exists, and we define~$\eta_k:=\min_{u:\|u\|_2=1} h_k(u)>0$.
By positive homogeneity of $h_k$, for every $u\neq\mathbf{0}$,
\begin{align*}
	h_k(u) 
	&= h_k\left( \frac{u}{\|u\|_2} \|u\|_2 \right)\\
	&= \|u\|_2 \, h_k\left( \frac{u}{\|u\|_2} \right)
	\geq
	\eta_k\|u\|_2.
\end{align*}
This bound also holds at $u=\mathbf{0}$ because $h_k(\mathbf{0}) = 0$. 
Thus, we have
\[
h_k(u) \geq \eta_k \|u\|_2 \quad \text{for every } u \in \mathbb{R}^m.
\]
Therefore, every $\gamma \in \Gamma_k \subseteq \mathbb{R}^m$ satisfies $\eta_k \|\gamma\|_2 \leq h_k(\gamma) \leq 1 $, which implies $\|\gamma\|_2 \leq 1/\eta_k$; i.e., $\Gamma_k$ is bounded. 
It is also closed by continuity of $h_k$, and hence~$\Gamma_k$ is compact.
\end{proof}

\begin{remark}[Role of the Strict-Positivity Condition]
Since
\[
h_k(u)
=\sigma_{\mathcal{X}_k}(-u)
+
\varepsilon^\top|u|
=
-\min_{x\in\mathcal{X}_k}
\bigl(u^\top x-\varepsilon^\top|u|\bigr),
\]
the condition $h_k(u)>0$ for every $u\neq\mathbf{0}$ in
Lemma~\ref{lemma:normalized-safe-action-geometry}
means that every nonzero control action $u$ incurs a strictly negative net
payoff for at least one supported return.
In particular, it rules out a \emph{robust-arbitrage direction} whose net payoff is strictly positive for every supported return.
\end{remark}

\begin{theorem}[Exact Safe-Action Factorization]
\label{thm:exact-safe-action-factorization}
Fix a stage $k\in\{0,\dots,N-1\}$ and suppose that
$
h_k(u)>0
$
for every
$
u\in\mathbb{R}^m\setminus\{\mathbf{0}\}.
$
Whenever
$
\bigl(V(k),V_{\max}(k)\bigr)\in\mathcal{S}_{d_{\max}},
$
a control action $u(k)$ keeps the successor state
$
\bigl(V(k+1),V_{\max}(k+1)\bigr)
$
in $\mathcal{S}_{d_{\max}}$ for every $x\in\mathcal{X}_k$
if and only if
\[
u(k)\in C(k)\Gamma_k.
\]
Equivalently, there exists some $\gamma(k)\in\Gamma_k$ such that
\[
u(k)
=
C(k)\gamma(k)
=
M(k)V(k)\gamma(k).
\]
\end{theorem}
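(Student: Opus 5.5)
By Theorem~\ref{thm:robust-drawdown-invariance}, with $C(k)=M(k)V(k)$ as in \eqref{eq:cushion function}, robust one-step invariance at a state in $\mathcal{S}_{d_{\max}}$ is equivalent to $h_k(u(k))\leq C(k)$. The task is therefore purely set-theoretic: show that $\{u: h_k(u)\leq C(k)\}=C(k)\Gamma_k$. First note that $C(k)=V(k)-(1-d_{\max})V_{\max}(k)\geq 0$, since the current state lies in $\mathcal{S}_{d_{\max}}$. I would split into two cases, $C(k)>0$ and $C(k)=0$. The only tool needed is the positive homogeneity of $h_k$ from Lemma~\ref{lemma:normalized-safe-action-geometry}, together with the strict-positivity hypothesis for the degenerate case.

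\emph{Case $C(k)>0$.} If $h_k(u)\leq C(k)$, set $\gamma:=u/C(k)$. By positive homogeneity, $h_k(\gamma)=h_k(u)/C(k)\leq 1$, so $\gamma\in\Gamma_k$ and $u=C(k)\gamma$. Conversely, if $u=C(k)\gamma$ with $\gamma\in\Gamma_k$, then $h_k(u)=C(k)h_k(\gamma)\leq C(k)$.

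\emph{Case $C(k)=0$.} Here $C(k)\Gamma_k=\{\mathbf{0}\}$; this uses only that $\Gamma_k$ is nonempty, which holds because $h_k(\mathbf{0})=0$, so $\mathbf{0}\in\Gamma_k$. The invariance condition reads $h_k(u)\leq 0$. By the hypothesis $h_k(u)>0$ for all $u\neq\mathbf{0}$, this forces $u=\mathbf{0}$. Conversely, $u=\mathbf{0}$ satisfies $h_k(\mathbf{0})=0\leq 0$. Hence the two sets coincide in this case as well.

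Combining the two cases yields $u(k)\in C(k)\Gamma_k$ if and only if $h_k(u(k))\leq C(k)$. By Theorem~\ref{thm:robust-drawdown-invariance}, this is exactly the robust-invariance condition. The ``equivalently'' form follows from the definition of $c\Gamma_k$ and the identity $C(k)=M(k)V(k)$. The only delicate point is the degenerate case $C(k)=0$, meaning the drawdown sits exactly at the limit $d_{\max}$. This is where the strict-positivity hypothesis is genuinely needed: without it, a nonzero robust-arbitrage direction $u$ with $h_k(u)\leq 0$ would be safe but would not lie in $0\cdot\Gamma_k=\{\mathbf{0}\}$. Compactness of $\Gamma_k$ is not needed for the equivalence itself.
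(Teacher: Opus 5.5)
Your proposal is correct and takes essentially the same route as the paper: you reduce via Theorem~\ref{thm:robust-drawdown-invariance} to $h_k(u(k))\leq C(k)$, rescale by positive homogeneity when $C(k)>0$, and use strict positivity to force $u(k)=\mathbf{0}$ when $C(k)=0$. Your remark that $0\Gamma_k=\{\mathbf{0}\}$ needs $\mathbf{0}\in\Gamma_k$ (true since $h_k(\mathbf{0})=0$) is a small detail the paper leaves implicit.
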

\begin{proof}
By Lemma~\ref{lemma:normalized-safe-action-geometry}, $h_k$ is
positively homogeneous. 
Suppose that
$\bigl(V(k),V_{\max}(k)\bigr)\in\mathcal{S}_{d_{\max}}$, which implies 
that $(1-d_{\max})V_{\max}(k) \leq V(k)$. 
By the definition of the cushion, $C(k)\geq0$.
Moreover,
according to Theorem~\ref{thm:robust-drawdown-invariance},
a control action $u(k)$ keeps the successor state in
$\mathcal{S}_{d_{\max}}$ for every $x\in\mathcal{X}_k$ if and only if
\begin{align} \label{ineq: h_k ineq}
	h_k\bigl(u(k)\bigr)\leq C(k).
\end{align}
We now consider two cases:

\emph{Case 1.} 
If $C(k)>0$, suppose that
$
h_k\bigl(u(k)\bigr)\leq C(k),
$
and define $\gamma(k):=u(k)/C(k)$. We first show $u(k)\in C(k)\Gamma_k.
$
Positive homogeneity of $h_k$ gives
\[
h_k(\gamma(k) ) 
= h_k \left(\frac{u(k)}{C(k)} \right) 
=
\frac{1}{C(k)}h_k(u(k))
\leq1,
\] where the last inequality holds by~\eqref{ineq: h_k ineq}.
Hence, $\gamma(k)\in\Gamma_k$ and $u(k)=C(k)\gamma(k)$. 
Conversely, if
$u(k)=C(k)\gamma(k)$ for some $\gamma(k)\in\Gamma_k$, then
\[
h_k\bigl(u(k)\bigr)
=
C(k)h_k(\gamma(k))
\leq C(k).
\]
Hence the stated factorization holds when $C(k)>0$.

\emph{Case 2.} If $C(k)=0$, the assumed strict positivity of $h_k$ away from
$u=\mathbf{0}$ implies
\begin{align} \label{eq:case2_for_modulated_policy}
\left\{
u\in\mathbb{R}^m:h_k(u)\leq0
\right\}
=
\{\mathbf{0}\}
=
0\Gamma_k. 
\end{align}
Therefore, $h_k\bigl(u(k)\bigr)\leq C(k)$ if and only if
$
u(k)\in C(k)\Gamma_k
$
also holds when $C(k)=0$.
Combining the two cases proves the result.
\end{proof}

\begin{remark}[Drawdown Modulation with Constant~$\gamma$]
We now specialize the safe-action factorization to drawdown-modulated
policies with constant $\gamma$, which is considered in \cite{hsieh2017drawdown,hsieh2023data}. 
Indeed, define
\begin{align} \label{eq: feasbile set for gamma}
	\Gamma
	:=
	\bigcap_{k=0}^{N-1}\Gamma_k
\end{align}
and fix any $\gamma\in\Gamma$.
Let $\pi^{\gamma,d_{\max}}$ denote the corresponding drawdown-modulated policy with constant $\gamma$, whose induced control satisfies
\begin{align} \label{eq: drawdown_modulated_polciy_with_constant_gamma}
u(k)
=
C(k)\gamma
\quad k=0,\dots,N-1,
\end{align}
where $C(k) = M(k) V(k)$ defined in \eqref{eq:cushion function}.
Whenever the current state belongs to $\mathcal{S}_{d_{\max}}$, we have~$C(k) \geq 0$. 
Since~$\gamma\in\Gamma\subseteq\Gamma_k$, \eqref{eq: normalized safe-action set} gives $h_k(\gamma)\leq1$. 
Together with~$C(k) \geq 0$ and positive homogeneity of~$h_k$, this~yields
$
h_k\bigl(u(k)\bigr)
=
C(k)h_k(\gamma)
\leq
C(k).
$
Hence, Theorem~\ref{thm:robust-drawdown-invariance} implies that the policy $\pi^{\gamma,d_{\max}}$ is robustly drawdown-safe.
\end{remark}

\section{Optimal Robust Drawdown-Safe Control}
\subsection{Robust Drawdown-Safe Control Problem}
To exclude nonzero actions with nonnegative net payoff for every
supported return in $\mathcal{X}_k$, we assume throughout this section  that
$
h_k(u)>0
$
for every  
$u\in\mathbb{R}^m\setminus\{\mathbf{0}\}$
and every 
$
k=0,\dots,N-1.
$
We now state the optimal robust drawdown-safe control problem:

\begin{problem}[Optimal Robust Drawdown-Safe Control]
\label{prob:finite-horizon-robust-safe-control}
For a fixed drawdown limit $d_{\max}\in[0,1)$, consider
\[
\begin{aligned}
\sup_{\pi\in\Pi}\quad
& \mathbb{E}[V_\pi(N)]\\
\text{s.t.}\quad
& \pi \text{ is robustly drawdown-safe},
\end{aligned}
\]
where robust drawdown safety is defined in
Definition~\ref{definition: robust drawdown safety}.
\end{problem}

\begin{remark}
	Since $V_0$ is fixed and $
	\mathbb{E}[R_\pi]
	=
	\frac{\mathbb{E}[V_\pi(N)]}{V_0}-1
	$, maximizing $\mathbb{E}[V_\pi(N)]$ is equivalent to maximizing $\mathbb{E}[R_\pi]$.
\end{remark}

The account dynamics and the robust-safety constraint are positively
homogeneous. Indeed, for any $\alpha>0$, if
$
\bigl(V(k),V_{\max}(k),u(k)\bigr)
$
is replaced by
$$
\bigl(\widetilde V(k),\widetilde V_{\max}(k),\widetilde u(k)\bigr)
 :=
\bigl(\alpha V(k), \alpha V_{\max}(k),\alpha u(k)\bigr),
$$
then, for the same return realization,
\[
\widetilde V(k+1)=\alpha V(k+1),
\quad
\widetilde V_{\max}(k+1)=\alpha V_{\max}(k+1).
\]
Consequently, the percentage drawdown is unchanged by this common
scaling.
We therefore separate the wealth scale from the account's relative
position within the drawdown-safe set.
Since $V_{\max}(k)\geq V_0>0$, define the \emph{normalized state}
\[
z(k)
:=
\frac{V(k)}{V_{\max}(k)}.
\]
For every policy $\pi$ feasible for Problem~\ref{prob:finite-horizon-robust-safe-control} and every return path in $\mathcal{X}_0\times\cdots\times\mathcal{X}_{N-1}$, we have
$
z(k)\in[1-d_{\max},1],
$
for $ k=0,\dots,N,
$
with
$
z(0)=1.
$
Moreover, the drawdown cushion~\eqref{eq:cushion function} satisfies
\[
C(k)
=
V_{\max}(k)
\left[
z(k)-(1-d_{\max})
\right].
\]
By Theorem~\ref{thm:exact-safe-action-factorization}, every robustly safe action $u(k)$ can be written as
\begin{align*}
	u(k) 
	= 
	C(k) \gamma(k)
	&=
	V_{\max}(k)
	\left[
	z(k)-(1-d_{\max})
	\right]
	\gamma(k),
\end{align*}
for some $\gamma(k)\in\Gamma_k$, where $\Gamma_k$ is the normalized safe-action set defined in~\eqref{eq: normalized safe-action set}.

\subsection{Reduced Bellman Recursion and Optimality}
Below, for the optimality analysis, we additionally assume that for $k=0,1,\dots, N-1$, return $X(k)$ is independent of $\mathcal{F}_k$ and has a stage-dependent distribution supported on $\mathcal{X}_k$.
For
$
z\in[1-d_{\max},1],
\gamma\in\Gamma_k,
$
and~$
x\in\mathcal{X}_k,
$
define the successor account value and the running maximum, both normalized by the current running maximum, as
\begin{align*}
	\widehat v^+(z,\gamma,x)
	&:=
	z+
	\left[
	z-(1-d_{\max})
	\right]
	\left(
	\gamma^\top x-\varepsilon^\top|\gamma|
	\right);\\
	\widehat w^+(z,\gamma,x)
	&:=
	\max\left\{
	1,\,
	\widehat v^+(z,\gamma,x)
	\right\},
\end{align*}
respectively. 
The successor normalized state is then
\[
z^+(z,\gamma,x)
:=
\frac{\widehat v^+(z,\gamma,x)}
     {\widehat w^+(z,\gamma,x)}.
\]
These quantities give the normalized state transition
\begin{align*}
V(k+1)
&=
V_{\max}(k)\,
\widehat v^+ \bigl(z(k),\gamma(k),X(k)\bigr),
\\
V_{\max}(k+1)
&=
V_{\max}(k)\,
\widehat w^+ \bigl(z(k),\gamma(k),X(k)\bigr),
\\
z(k+1)
&= \frac{V(k+1)}{V_{\max}(k+1)} 
=
z^+\bigl(z(k),\gamma(k),X(k)\bigr).
\end{align*}
Notably, robust safety guarantees
$
z^+(z,\gamma,x)
\in
[1-d_{\max},1].
$

\emph{Reduced Bellman Recursion.}
Fix $N\geq 1$.
For $z\in[1-d_{\max},1],$ set
$
J_N(z):=z,
$
and, for $k=N-1,\dots,0$, define~$J_k$ recursively by
\begin{align}
J_k(z)
:=
\sup_{\gamma\in\Gamma_k}
\mathbb{E}\Bigl[
\widehat w^+\bigl(z,\gamma,X(k)\bigr)
J_{k+1}\!\left(
z^+\bigl(z,\gamma,X(k)\bigr)
\right)
\Bigr].
\label{eq:reduced-bellman-recursion}
\end{align}
At this point, the functions $\{J_k\}_{k=0}^N$ are defined by the
backward recursion. Theorem~\ref{thm:reduced-bellman-recursion} below
shows that $wJ_k(v/w)$ equals the optimal expected terminal account
value from any stage-$k$ state $(v,w)$.

\begin{lemma}[Regularity and Pointwise Attainment]
\label{lem:bellman-regularity}
	For every $k=0,\dots,N$, the recursively defined function~$J_k$ is continuous on
	$[1-d_{\max},1]$. 
	Moreover, for every $k=0,\dots,N-1$ and every
	$z\in[1-d_{\max},1]$, the supremum in
	\eqref{eq:reduced-bellman-recursion} is attained.
\end{lemma}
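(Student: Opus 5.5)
We argue by backward induction on $k$, proving at each stage that $J_k$ is continuous on the compact interval $I:=[1-d_{\max},1]$ and that the supremum defining $J_k$ is attained. The base case $J_N(z)=z$ is trivially continuous. For the inductive step, fix $k\le N-1$ and assume $J_{k+1}$ is continuous on $I$. Define the stage integrand
\[
g_k(z,\gamma,x):=\widehat w^+(z,\gamma,x)\,J_{k+1}\bigl(z^+(z,\gamma,x)\bigr),
\]
on $I\times\Gamma_k\times\mathcal{X}_k$, and set $G_k(z,\gamma):=\mathbb{E}[g_k(z,\gamma,X(k))]$, so that $J_k(z)=\sup_{\gamma\in\Gamma_k}G_k(z,\gamma)$.

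\textbf{Continuity of the integrand.} The map $\widehat v^+$ is continuous because $\gamma\mapsto\varepsilon^\top|\gamma|$ is continuous. Hence $\widehat w^+=\max\{1,\widehat v^+\}$ is continuous and satisfies $\widehat w^+\ge1$, so $z^+=\widehat v^+/\widehat w^+$ is continuous as well.

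We must also check that $z^+$ lands in $I$, the domain of $J_{k+1}$. For $\gamma\in\Gamma_k$ and $z\in I$, put $v=z$, $w=1$ and $u=[z-(1-d_{\max})]\gamma$. Then $(v,w)\in\mathcal{S}_{d_{\max}}$ with cushion $C=z-(1-d_{\max})$, so $u\in C\Gamma_k$. By Theorem~\ref{thm:exact-safe-action-factorization} (or directly by Theorem~\ref{thm:robust-drawdown-invariance} and positive homogeneity), the successor $(\widehat v^+,\widehat w^+)$ lies in $\mathcal{S}_{d_{\max}}$, and therefore $z^+\in I$ for every $x\in\mathcal{X}_k$.

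It follows that $g_k$ is continuous on $I\times\Gamma_k\times\mathcal{X}_k$. By Lemma~\ref{lemma:normalized-safe-action-geometry}, under the standing positivity assumption on $h_k$, $\Gamma_k$ is compact, so this domain is compact.

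\textbf{Continuity of the expectation.} Since $g_k$ is continuous on a compact set, it is bounded, say by $B$; indeed $\widehat w^+$ is bounded there and $J_{k+1}$ is bounded on $I$. This uniform bound is the step that needs care, because it is what allows us to pass from pointwise continuity to continuity of the expectation, and it relies on compactness of both $\Gamma_k$ and $\mathcal{X}_k$. Now take $(z_n,\gamma_n)\to(z,\gamma)$. For every $x\in\mathcal{X}_k$, $g_k(z_n,\gamma_n,x)\to g_k(z,\gamma,x)$. Since $X(k)\in\mathcal{X}_k$ almost surely, dominated convergence with the constant $B$ gives $G_k(z_n,\gamma_n)\to G_k(z,\gamma)$, so $G_k$ is jointly continuous on $I\times\Gamma_k$. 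Alternatively, uniform continuity of $g_k$ on the compact domain yields $|G_k(z,\gamma)-G_k(z',\gamma')|\le\sup_x|g_k(z,\gamma,x)-g_k(z',\gamma',x)|\to0$ directly.

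\textbf{Attainment and continuity of $J_k$.} For fixed $z$, the map $\gamma\mapsto G_k(z,\gamma)$ is continuous on the compact set $\Gamma_k$, which is nonempty since it contains $\mathbf{0}$. By Weierstrass the supremum is attained. Continuity of $J_k(z)=\max_{\gamma\in\Gamma_k}G_k(z,\gamma)$ then follows from Berge's maximum theorem with the constant compact constraint set $\Gamma_k$. Concretely, $G_k$ is uniformly continuous on $I\times\Gamma_k$, hence
\[
|J_k(z)-J_k(z')|\le\sup_{\gamma\in\Gamma_k}|G_k(z,\gamma)-G_k(z',\gamma)|,
\]
and the right-hand side tends to $0$ as $z'\to z$. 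This completes the induction.
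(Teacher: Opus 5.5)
Your proof is correct and follows the paper's argument: backward induction, continuity and boundedness of the integrand on the compact set $[1-d_{\max},1]\times\Gamma_k\times\mathcal{X}_k$, dominated convergence for joint continuity of the expectation, and Berge's maximum theorem with the fixed compact set $\Gamma_k$. You also fill in details the paper leaves implicit: that $z^+$ lands in $[1-d_{\max},1]$, that $\mathbf{0}\in\Gamma_k$, and an elementary uniform-continuity substitute for Berge.
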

\begin{proof}
We proceed by backward induction.
Note that the terminal function $J_N(z)=z$ is continuous. 
Suppose that $J_{k+1}$ is continuous, and define an auxiliary function
\[
Q_k(z,\gamma)
:=
\mathbb{E}\!\left[
\widehat w^+(z,\gamma, X(k))
J_{k+1}\!\left(
z^+\bigl(z,\gamma,X(k)\bigr)
\right)
\right].
\]
The integrand is continuous and bounded on the compact set
$
[1-d_{\max},1]\times\Gamma_k\times\mathcal{X}_k
$ 
where $\Gamma_k$ compactness follows from the assumption in Lemma~\ref{lemma:normalized-safe-action-geometry}.
Hence, by the dominated convergence theorem, $Q_k$ is
jointly continuous on
$
[1-d_{\max},1]\times\Gamma_k
$.
Since $\Gamma_k$ is nonempty, compact, and independent of $z$,
Berge's maximum theorem \cite[Theorem~17.31]{aliprantis2006infinite} implies~that
\[
J_k(z)=\max_{\gamma\in\Gamma_k}Q_k(z,\gamma)
\]
is continuous and that the maximum is attained for every~$z$.
Backward induction completes the proof.
\end{proof}

For $k=0,\dots,N$, let $\Pi_k^{\mathrm{safe}}$ denote the robustly
safe causal policy sequences over stages $k,\dots,N-1$, and let~$\mathbb{E}_k^{v,w}[\cdot]$ denote expectation for the process initialized at
stage $k$ from
$
(V(k),V_{\max}(k))=(v,w).
$
We have the following optimality result for the Bellman recursion.

\begin{theorem}[Optimality of the Bellman Recursion]
\label{thm:reduced-bellman-recursion}
For every $k=0,\dots,N$ and
$(v,w)\in\mathcal{S}_{d_{\max}}$, we have the optimal-value identity
\begin{align}\label{eq: optimal-value identity}
	\sup_{\pi\in\Pi_k^{\mathrm{safe}}}
	\mathbb{E}_k^{v,w}\!\left[V_\pi(N)\right]
	=
	wJ_k\!\left(\frac{v}{w}\right).
\end{align}
Moreover, for each $k=0,\dots, N-1$, there exists a Borel-measurable function
$
\gamma_k^\star: [1-d_{\max}, 1] \to \Gamma_k
$ 
that attains the maximum in \eqref{eq:reduced-bellman-recursion} for every
$z\in[1-d_{\max},1]$. 
The resulting feedback law
\[
u^{\pi^\star}(k)
=
V_{\max}(k)
\left[z(k)-(1-d_{\max})\right]
\gamma_k^\star\bigl(z(k)\bigr)
\]
defines a causal policy $\pi^\star$ that renders
$\mathcal{S}_{d_{\max}}$ robustly invariant and attains the supremum in the optimal-value identity. In particular,
\[
\mathbb{E}[V_{\pi^\star}(N)]
=
V_0J_0(1),
\qquad
\mathbb{E}[R_{\pi^\star}]
=
J_0(1)-1.
\]
\end{theorem}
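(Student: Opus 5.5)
We argue by backward induction on $k$, proving simultaneously the value identity \eqref{eq: optimal-value identity} and the optimality of the feedback law. At $k=N$ the set $\Pi_N^{\mathrm{safe}}$ consists only of the empty policy, so $V(N)=v=wJ_N(v/w)$ and there is nothing to prove. Before the induction we settle measurability. By Lemma~\ref{lem:bellman-regularity}, $Q_k$ is jointly continuous on $[1-d_{\max},1]\times\Gamma_k$, and $\Gamma_k$ is a nonempty compact set that does not depend on $z$. The argmax correspondence $z\mapsto\arg\max_{\gamma\in\Gamma_k}Q_k(z,\gamma)$ therefore has nonempty compact values and is upper hemicontinuous, by Berge's theorem. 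Hence it is weakly measurable, and the Kuratowski--Ryll-Nardzewski selection theorem (a measurable maximum theorem, e.g.\ \cite[Theorem~18.19]{aliprantis2006infinite}) supplies a Borel selector $\gamma_k^\star$.

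\emph{Upper bound.} Assume the identity holds at stage $k+1$. Fix $(v,w)\in\mathcal{S}_{d_{\max}}$ and any $\pi\in\Pi_k^{\mathrm{safe}}$, and write $z=v/w$.
\begin{enumerate}
\item By Theorem~\ref{thm:exact-safe-action-factorization}, the stage-$k$ action, which is deterministic given the initialization at $(v,w)$, satisfies $u=w[z-(1-d_{\max})]\gamma$ for some $\gamma\in\Gamma_k$. When the cushion is zero, $u=\mathbf{0}$ and any $\gamma$ may be used.
\item With this $\gamma$, the successor state is $(w\widehat v^+,w\widehat w^+)$ evaluated at $(z,\gamma,X(k))$, and it lies in $\mathcal{S}_{d_{\max}}$.
\item Condition on $X(k)=x$. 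The continuation $\pi$ restricted to stages $k+1,\dots,N-1$, with $X(k)$ frozen at $x$, is a robustly safe policy from that successor state. Because $X(k+1),\dots,X(N-1)$ are independent of $\mathcal{F}_{k+1}$, the conditional law of the future returns given $X(k)=x$ is the unconditional one.
\item The induction hypothesis then bounds the conditional expectation of $V_\pi(N)$ by $w\,\widehat w^+J_{k+1}(z^+)$.
\item The tower property gives $\mathbb{E}_k^{v,w}[V_\pi(N)]\le wQ_k(z,\gamma)\le wJ_k(z)$.
\end{enumerate}

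\emph{Achievability.} We build $\pi^\star$ from the selectors $\gamma_j^\star$ through the stated feedback law. It is causal and Borel measurable, since $z(j)$ and $V_{\max}(j)$ are measurable functions of the past returns. By the factorization theorem it is robustly safe, and robust safety gives $V\ge0$, so it is admissible. We then verify $\mathbb{E}_k^{v,w}[V_{\pi^\star}(N)]=wJ_k(v/w)$ by the same backward induction, this time with equalities: $\gamma_k^\star(z)$ attains $Q_k$, and the tail of $\pi^\star$ is itself the feedback policy from stage $k+1$, so it achieves $w^+J_{k+1}(z^+)$ exactly for every realization. Integrability of $R_{\pi^\star}$ holds because all quantities are bounded on the compact supports. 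Setting $k=0$ and $(v,w)=(V_0,V_0)$ yields $\mathbb{E}[V_{\pi^\star}(N)]=V_0J_0(1)$, and dividing by $V_0$ gives $\mathbb{E}[R_{\pi^\star}]=J_0(1)-1$.

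\emph{Main obstacle.} The delicate step is the conditioning argument in the upper bound, for general history-dependent policies. We must check that the map $x\mapsto\pi(\cdot\mid X(k)=x)$ produces, for each $x$, a policy in $\Pi_{k+1}^{\mathrm{safe}}$ started from the successor state, and that the resulting conditional value is measurable in $x$. We would handle this by working with the policy as deterministic functions of the return path and using product-measure (Fubini) disintegration, which stagewise independence makes available. Robust safety is defined pathwise over every $x\in\mathcal{X}_k$, so no null-set issues arise.
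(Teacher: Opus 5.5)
Your proposal is correct and follows the same route as the paper: backward induction using the exact safe-action factorization and the induction hypothesis at the successor state, a Borel maximizer $\gamma_k^\star$ from the measurable maximum theorem \cite[Theorem~18.19]{aliprantis2006infinite}, and a second backward induction with equalities showing that the feedback policy $\pi^\star$ attains $wJ_k(v/w)$. You separate the upper bound for arbitrary history-dependent safe policies (conditioning on $X(k)=x$ and using Fubini under stagewise independence) from achievability, which is more careful than the paper, since it passes the supremum over continuation policies through the expectation without separately justifying that interchange.
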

\begin{proof} 
Under the standing assumption that return $X(k)$ is stage-wise independent of $\mathcal{F}_k$ for every $k=0,\dots, N-1$, the continuation problem at stage $k$ depends on past returns only through the current state $(v,w)$. 
We prove the optimal-value identity~\eqref{eq: optimal-value identity} by backward induction.

Indeed, at terminal stage $k=N$, no decisions remain, and the terminal account value is
\[
v
=
w\frac{v}{w}
=
wJ_N\!\left(\frac{v}{w}\right).
\]
Thus, the assertion holds at the terminal stage.

Now suppose that the optimal-value identity~\eqref{eq: optimal-value identity} holds at stage $k+1$, and consider a stage-$k$ state $(v,w)$. Write
$
z=v/w.
$
By exact safe-action factorization Theorem~\ref{thm:exact-safe-action-factorization}, every robustly safe action $u$ can be factorized~as
\[
u
= C\gamma
\; \text{ with } \;
\gamma\in\Gamma_k,
\]
where $C = w\left[z-(1-d_{\max})\right]$.
If the stage-$k$ return is~$x\in\mathcal{X}_k$, the successor account value and running maximum are
\[
v^+
=
w\,\widehat v^+(z,\gamma,x),
\qquad
w^+
=
w\,\widehat w^+(z,\gamma,x),
\]
and hence
$
\frac{v^+}{w^+}
=
z^+(z,\gamma,x).
$
By the induction hypothesis, we have
\begin{align*}
\sup_{\pi\in\Pi_{k+1}^{\mathrm{safe}}}
\mathbb{E}_{k+1}^{v^+,w^+}\!\left[V_\pi(N)\right]
&=
w^+J_{k+1}\!\left(\frac{v^+}{w^+}\right)\\
&=
w\,\widehat w^+(z,\gamma,x)
J_{k+1}\!\left(z^+(z,\gamma,x)\right).
\end{align*}
For the chosen $\gamma\in\Gamma_k$, the preceding identity holds
for every $x\in\mathcal{X}_k$.
Evaluating the right-hand side at~$x=X(k)$ and taking expectation gives the expected terminal account value under optimal subsequent controls; i.e.,~$
w\,
\mathbb{E}\!\left[
\widehat w^+\bigl(z,\gamma,X(k)\bigr)
J_{k+1}\!\left(
z^+\bigl(z,\gamma,X(k)\bigr)
\right)
\right].
$
Taking the supremum over $\gamma\in\Gamma_k$ gives
\begin{align*}
&w
\sup_{\gamma\in\Gamma_k}
\mathbb{E}\!\left[
\widehat w^+\bigl(z,\gamma,X(k)\bigr)
J_{k+1}\!\left(
z^+\bigl(z,\gamma,X(k)\bigr)
\right)
\right]
\\
&\qquad
=
wJ_k(z)
=
wJ_k\!\left(\frac{v}{w}\right),
\end{align*}
where the first equality follows from
\eqref{eq:reduced-bellman-recursion}.
Thus, the assertion holds at stage $k$, and backward induction proves the claimed optimal-value representation.

It remains to show that this supremum is attained by a causal robustly
safe policy.
The proof of Lemma~\ref{lem:bellman-regularity} shows that the Bellman
objective in \eqref{eq:reduced-bellman-recursion} is continuous in $(z,\gamma)$.
Since $\Gamma_k$ is compact, the measurable maximum theorem \cite[Theorem~18.19]{aliprantis2006infinite} provides,
for every $k=0,\dots,N-1$, a Borel-measurable maximizing selector
$
\gamma_k^\star:
[1-d_{\max},1]
\to
\Gamma_k.
$
Since both~$C(k)$ and $z(k)$ are $\mathcal{F}_k$-measurable, the
feedback law~$
u^{\pi^\star}(k)
=
C(k)\gamma_k^\star(z(k))
$
is also $\mathcal{F}_k$-measurable.
Thus, the maximizing selectors $\gamma_k^\star$ define a causal
policy~$\pi^\star$.

Moreover, since
$
\gamma_k^\star(z(k))\in\Gamma_k
$
and
$
C(k)
=
V_{\max}(k)[z(k)-(1-d_{\max})],
$
positive homogeneity gives
\[
h_k\bigl(u^{\pi^\star}(k)\bigr)
=
C(k)
h_k\!\left(\gamma_k^\star(z(k))\right)
\leq
C(k).
\]
Theorem~\ref{thm:robust-drawdown-invariance} therefore implies that
$\pi^\star$ renders
$\mathcal{S}_{d_{\max}}$ robustly invariant.

Because $\gamma_k^\star$ attains the Bellman maximum at every stage
and state, backward induction yields, for every $k=0,\dots,N$ and
$(v,w)\in\mathcal{S}_{d_{\max}}$,
\begin{align*}
\mathbb{E}_k^{v,w}\!\left[V_{\pi^\star}(N)\right]
&=
wJ_k\!\left(\frac{v}{w}\right)
\\
&=
\sup_{\pi\in\Pi_k^{\mathrm{safe}}}
\mathbb{E}_k^{v,w}\!\left[V_\pi(N)\right].
\end{align*}
At the initial state,
$
(V(0),V_{\max}(0))
=
(V_0,V_0)
$
and
$
z(0)=1,
$
and consequently
$
\mathbb{E}[V_{\pi^\star}(N)]
=
V_0J_0(1).
$
Thus,~$\pi^\star$ is globally optimal for
Problem~\ref{prob:finite-horizon-robust-safe-control}.
Finally,
\[
\mathbb{E}[R_{\pi^\star}]
=
\frac{\mathbb{E}[V_{\pi^\star}(N)]-V_0}{V_0}
=
J_0(1)-1. \qedhere
\]
\end{proof}

\section{Comparison with LTI Policies}
\label{section:comparison with LTI policies}
This section compares the drawdown-modulated and LTI policies under a prescribed drawdown limit.
We first characterize the LTI gains satisfying this limit over the horizon, then establish conditions for strict improvement by the optimal drawdown-modulated policy.

\subsection{Drawdown Guarantee for LTI Benchmark} 
\label{subsection: LTI feedback benchmark}
As a benchmark, we consider constant-gain linear time-invariant (LTI) feedback policies within the admissible causal policy class
$\Pi$. 
Throughout this section, we assume that the return-support information is time-invariant
$
X(k)\in\mathcal{X}
$
almost surely, where $\mathcal{X}\subset\mathbb{R}^m$ is nonempty and compact.
We further assume the following conditional-mean condition: for stage $k=0,\dots,N-1,$
\begin{align}\label{eq: conditional-mean assumption} 
	\mathbb{E}[X(k) \mid \mathcal{F}_k]
	=
	\mu
	\quad \text{almost surely},  
\end{align}
for some $\mu\in\mathbb{R}^m$. 

For each feedback gain vector $K=[K_1\;\cdots\;K_m]^\top\in\mathbb{R}^m$, let $\pi^K := \{\pi_k^K\}_{k=0}^{N-1}$ denote the corresponding policy, whose induced control satisfies
\[
	u_K(k) 
	= \pi_k^K \bigl(X(0), \dots, X(k-1) \bigr)
	:= K V_K(k).
\]
The corresponding admissible feedback gain set is
\begin{align} \label{eq: admissible feedback gain set}
	\mathcal{K}
	:=
	\left\{
	K\in\mathbb{R}^m:
	\sigma_{\mathcal{X}}(-K) + \varepsilon^\top |K| \leq1
	\right\}.
\end{align}
This set is closed and convex, irrespective of whether~$\mathcal{X}$ is convex.\footnote{ 
Indeed, the support function $\sigma_{\mathcal{X}}$ is convex and lower semicontinuous, and hence so is the mapping~$K \mapsto \sigma_{\mathcal{X}}(-K) + \varepsilon^\top |K|$.
Therefore, $\mathcal{K}$ is a closed convex sublevel set; see \cite{beck2017first}.
}
Hence, every $K\in\mathcal{K}$ induces an admissible causal policy, and
the LTI benchmark class is
\[
\Pi_{\mathrm{LTI}}
:=
\left\{
\pi^K :K\in\mathcal{K}
\right\}
\subseteq\Pi.
\]
Beginning from $V_K(0)=V_0>0$, the LTI account value satisfies
\begin{align}
	V_K(k+1)
	&=
	V_K(k)+K^\top X(k)V_K(k) -\varepsilon^\top|KV_K(k)| \notag \\ 
	&=
	\bigl(1+K^\top X(k)-\varepsilon^\top|K|\bigr)V_K(k). \label{eq: LTI dynamics}
\end{align}
For each $K\in\mathcal{K}$, write
$
R_K:=R_{\pi^K},
$
and
$
d_K^\star:=d_{\pi^K}^\star.
$
Since $V_K(k)$ is $\mathcal{F}_k$-measurable, the conditional-mean
assumption gives
\begin{align*}
	&\mathbb{E}[V_K(k+1)\mid\mathcal{F}_k]\\
	&=
	V_K(k)
	\left(
	1+K^\top
	\mathbb{E}[X(k)\mid\mathcal{F}_k] - \varepsilon^\top |K|
	\right)\\
	&=
	(1+K^\top\mu - \varepsilon^\top |K|)V_K(k).
\end{align*}
Taking expectations and iterating from $V_K(0)=V_0$ yields
$
\mathbb{E}[V_K(N)]
=
V_0(1+K^\top\mu -\varepsilon^\top |K|)^N.
$
Therefore,
\[
\mathbb{E}[R_K]
=
(1+K^\top\mu - \varepsilon^\top |K|)^N-1.
\]
To compare policies under a common prescribed drawdown limit,
we next characterize the LTI gains that satisfy this limit
over the horizon.  

\begin{proposition}[LTI Drawdown Guarantee]
\label{prop:lti-finite-horizon-drawdown}
	Fix $d_{\max}\in[0,1)$ and $K\in\mathcal{K}$.
	Starting from~$V_K(0)=V_0>0$, the LTI policy $\pi^K$ satisfies~$d_K^\star\leq d_{\max}$ for every return path in $\mathcal{X}^N$ if and only if
	\begin{align}
	h_k(K)
	\leq
	1-(1-d_{\max})^{1/N}.
	\label{ineq:lti-finite-horizon-drawdown}
	\end{align}
where $h_k(K)=\sigma_{\mathcal{X}}(-K)+\varepsilon^\top|K|$ is independent of $k$.
\end{proposition}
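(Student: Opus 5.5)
The plan is to reduce everything to the one-step gross growth factor $G(x):=1+K^\top x-\varepsilon^\top|K|$, so that $V_K(k+1)=G(X(k))V_K(k)$ by \eqref{eq: LTI dynamics}. Write $h:=h_k(K)=\sigma_{\mathcal{X}}(-K)+\varepsilon^\top|K|$. Then
\[
\min_{x\in\mathcal{X}}G(x)=1-h=:g_{\min},
\]
and the minimum is attained at some $\underline x\in\mathcal{X}$ because $\mathcal{X}$ is compact and $G$ is continuous. Since $K\in\mathcal{K}$, we have $h\le1$, so $g_{\min}\ge0$. Note that \eqref{ineq:lti-finite-horizon-drawdown} is equivalent to $g_{\min}^N\ge 1-d_{\max}$, because both sides are nonnegative and $t\mapsto t^N$ is increasing on $\mathbb{R}_+$.

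\emph{Sufficiency.} Assume $g_{\min}^N\ge1-d_{\max}$. Fix any path in $\mathcal{X}^N$ and stages $0\le i\le j\le N$. Then
\[
V_K(j)=V_K(i)\prod_{l=i}^{j-1}G(x(l))\ge V_K(i)\,g_{\min}^{\,j-i}\ge V_K(i)\,g_{\min}^N.
\]
The last inequality uses $g_{\min}\le1$, which holds because $h\ge0$ and hence $g_{\min}^{\,j-i}\ge g_{\min}^N$. I need to check that $h\ge0$. If $d_{\max}<1$ then $g_{\min}>0$. More basically, nonnegativity of $h$ is needed for the sufficiency direction. If $h<0$, every step strictly increases wealth and the drawdown is zero, so the claim holds trivially. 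In that case, however, the inequality $g_{\min}^{j-i}\ge g_{\min}^N$ fails but is not needed, so I will split into the cases $h\le 0$ and $h>0$.

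In the case $h>0$, choose $i\le j$ such that $V_K(i)=V_{\max}(j)$. This gives $V_K(j)\ge(1-d_{\max})V_{\max}(j)$, that is, $d_K(j)\le d_{\max}$ for every $j$. Hence $d_K^\star\le d_{\max}$ on every path.

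\emph{Necessity.} Suppose instead that $g_{\min}^N<1-d_{\max}$. Then $h>0$, so $g_{\min}<1$. Take the constant path $x(l)=\underline x$ for $l=0,\dots,N-1$. Along this path $V_K(k)=V_0g_{\min}^k$ is nonincreasing, so $V_{\max}(N)=V_0$. It follows that
\[
d_K(N)=1-g_{\min}^N>d_{\max},
\]
which violates the limit on a path in $\mathcal{X}^N$. Note that the time invariance of $\mathcal{X}$ is exactly what allows the worst-case return to be repeated at every stage.

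The main obstacle is bookkeeping rather than depth. The sufficiency step must bound the drawdown measured from the running peak over an arbitrary sub-window of length at most $N$, not only from stage $0$. This is where the monotonicity $g_{\min}^{\,j-i}\ge g_{\min}^N$ is used, and it needs $g_{\min}\in[0,1]$. The degenerate cases $h\le0$ and $g_{\min}=0$ also require care, and both are handled by the case split and the nonnegativity of $g_{\min}$.
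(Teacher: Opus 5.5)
Your proof is correct and follows the paper's argument. Necessity uses the constant worst-case return path $x(l)=\underline x$, which lies in $\mathcal{X}^N$ by time invariance of $\mathcal{X}$, and sufficiency chains the one-step lower bound $1-h_k(K)$ on the wealth multiplier from the stage attaining the running maximum. The only difference is that the paper bounds each multiplier directly by $(1-d_{\max})^{1/N}\le 1$, so $(1-d_{\max})^{(k-i)/N}\ge 1-d_{\max}$ follows without your case split on the sign of $h_k(K)$.
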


\begin{proof}
Let $K\in \mathcal{K}$ and note that
\begin{align*}
\inf_{x\in\mathcal{X}}
\left(1+K^\top x-\varepsilon^\top|K|\right)
&=
1-\sigma_{\mathcal{X}}(-K)-\varepsilon^\top|K|
\geq0,
\end{align*}
where nonnegativity follows from $K\in\mathcal{K}$ defined in \eqref{eq: admissible feedback gain set}.

For necessity, compactness of $\mathcal{X}$ implies that
the minimum is attained at some $x^\star\in\mathcal{X}$.
Along the constant return path $x(k)=x^\star$,
$k=0,\dots,N-1$, the account value satisfies
\begin{align} \label{eq: LTI cumulative account}
	V_K(N)=(1-\sigma_{\mathcal{X}}(-K)-\varepsilon^\top|K|)^N V_0.
\end{align}
Using the prescribed drawdown bound $d_K^\star \leq d_{\max}$ implies $d(k) \leq d_{\max}$ for all $k$; i.e., $V(k) \geq (1-d_{\max})V_{\max}(k)$ for all $k$, which also holds at $k=N$.
Hence, applying \eqref{eq: LTI cumulative account} on the left-hand side gives
\begin{align*}
	(1-\sigma_{\mathcal{X}}(-K)-\varepsilon^\top|K|)^N V_0 
	&\geq (1-d_{\max})V_{\max}(N)\\
	&\geq (1-d_{\max})V_0,
\end{align*}
where the last inequality holds by the fact that the running maximum is at least $V_0$. Cancelling out $V_0>0$ on both sides and taking the $N$th root gives
$$
(1-\sigma_{\mathcal{X}}(-K)-\varepsilon^\top|K|)\geq(1-d_{\max})^{1/N},
$$ 
which is equivalent to \eqref{ineq:lti-finite-horizon-drawdown}.

Conversely, suppose \eqref{ineq:lti-finite-horizon-drawdown} holds.
The LTI account dynamics~\eqref{eq: LTI dynamics}~gives
\begin{align*}
	V_K(k+1)
	&=
	\bigl(1+K^\top X(k)-\varepsilon^\top|K|\bigr)V_K(k)\\
	&\geq \bigl( 1-\sigma_{\mathcal{X}}(-K) - \varepsilon^\top|K| \bigr) V_K(k)\\
	& \geq ( 1-d_{\max})^{1/N}  V_K(k),
\end{align*}
where the last inequality uses \eqref{ineq:lti-finite-horizon-drawdown}. 
Thus, the one-stage relative loss cannot exceed $1-(1-d_{\max})^{1/N}$.

Fix any return path in $\mathcal{X}^N$ and any
$k\in\{0,\dots,N\}$.
Choose $i^* \in\{0,\dots,k\}$ such that
$
V_K(i^*)= V_{\max}(k).
$
Iterating the account dynamics from $i^*$ to $k$ gives
\begin{align}
V_K(k)
& \geq
(1-d_{\max})^{(k-i^*)/N}V_K(i^*) \notag \\
& \geq
(1-d_{\max})V_K(i^*)	\label{ineq:intermediate ineq}\\
&= 
(1-d_{\max})V_{\max}(k), \label{ineq:drawdown ineq in Vmax form}
\end{align}
where the last inequality~\eqref{ineq:intermediate ineq} uses $0\leq \frac{k-i^*}{N} \leq 1$.
Consequently, \eqref{ineq:drawdown ineq in Vmax form} implies that the drawdown at stage $k$ satisfies
\[
d_K(k)
=
1-\frac{V_K(k)}{V_{\max}(k)}
\leq d_{\max}.
\]
Since the return path and stage $k$ were arbitrary,
$d_K^\star\leq d_{\max}$ along every return path in
$\mathcal{X}^N$.
\end{proof}

For a prescribed drawdown limit $d_{\max}\in[0,1)$,
Proposition~\ref{prop:lti-finite-horizon-drawdown}
gives the following LTI benchmark optimization problem:
\begin{align} \label{problem: LTI optimization problem}
	\max_{K\in\mathcal{K}}
	\left\{
	\mathbb{E}[R_K]:
	K \text{ satisfies }
	\eqref{ineq:lti-finite-horizon-drawdown}
	\right\},
\end{align}
where
$\mathbb{E}[R_K]
=(1+K^\top\mu-\varepsilon^\top|K|)^N-1$.

\begin{remark}[LTI Class as a Special Case of Drawdown Modulation]
Under the time-invariant support assumption, $\Gamma=\mathcal{K}$.
Extending the modulator by the convention $M(k)=1$ when $d_{\max}=1$, the drawdown-modulated law reduces to
$
u(k)=\gamma V(k).
$
Thus every admissible LTI policy is recovered by setting $\gamma:=K$; see also \cite{hsieh2017drawdown,hsieh2017inefficiency}.
\end{remark}

\subsection{Outperformance of Drawdown Modulation}
Under a prescribed drawdown limit $d_{\max}$, the following theorem indicates that optimal drawdown-modulated policy achieves no lower expected return
than any LTI policy satisfying that limit.

\begin{theorem}[Weak and Strict Improvement over LTI Policies]
\label{thm:exact-strict-domination}
	Assume $N\geq 2$ and that, for each $k=0,\dots,N-1$,
	$X(k)$ is independent of $\mathcal{F}_k$, and $h_k(u)>0$ for every $u\neq\mathbf{0}$.
	Fix~$d_{\max}\in[0,1)$ and $K\in\mathcal{K}$ satisfying~\eqref{ineq:lti-finite-horizon-drawdown}.
	For this drawdown limit, let $J_0$ denote the stage-$0$ Bellman value function and let $\pi^\star$ be an optimal drawdown-modulated policy provided by Theorem~\ref{thm:reduced-bellman-recursion}.
	Then, we have
	\begin{align} \label{ineq:weak outperformance}
	\mathbb{E}[R_{\pi^\star}]
	=
	J_0(1)-1
	\geq
	\mathbb{E}[R_K].
	\end{align}
	The inequality is strict whenever 
	$
	K^\top\mu-\varepsilon^\top|K|>0.
	$
\end{theorem}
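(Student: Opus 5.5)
The plan is to prove the weak inequality \eqref{ineq:weak outperformance} by showing that $\pi^K$ is itself feasible for Problem~\ref{prob:finite-horizon-robust-safe-control}, and then invoking Theorem~\ref{thm:reduced-bellman-recursion}. For strict improvement, I would perturb $\pi^K$ into a feasible robustly safe policy with strictly larger expected terminal wealth.

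\emph{Weak improvement.} Let $K$ satisfy \eqref{ineq:lti-finite-horizon-drawdown}. By Proposition~\ref{prop:lti-finite-horizon-drawdown}, every state reachable under $\pi^K$ along a return path in $\mathcal{X}^N$ lies in $\mathcal{S}_{d_{\max}}$, and so does its successor for every $x\in\mathcal{X}$.
\begin{itemize}
\item By the ``only if'' direction of Theorem~\ref{thm:robust-drawdown-invariance}, the action $u_K(k)=KV_K(k)$ therefore satisfies $h_k(u_K(k))\le C(k)$ at every reachable state.
\item Equivalently, by Theorem~\ref{thm:exact-safe-action-factorization}, $u_K(k)=C(k)\gamma(k)$ for some $\mathcal{F}_k$-measurable $\gamma(k)\in\Gamma_k$.
\end{itemize}
A causal policy is a function of the return history, so only reachable states matter. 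I would redefine $\pi^K$ off the reachable set (for instance by $u=\mathbf{0}$) to make it robustly safe in the sense of Definition~\ref{definition: robust drawdown safety}, without changing $V_K(N)$ almost surely. The induction in the proof of Theorem~\ref{thm:reduced-bellman-recursion} uses only the factorization at realized states, so $\mathbb{E}[V_K(N)]\le V_0J_0(1)$. Dividing by $V_0$ and using $\mathbb{E}[R_{\pi^\star}]=J_0(1)-1$ gives \eqref{ineq:weak outperformance}.

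\emph{Strict improvement.} Write $g:=K^\top\mu-\varepsilon^\top|K|>0$, $h:=h(K)$ and $\delta:=1-(1-d_{\max})^{1/N}$, so that $0<h\le\delta$.
\begin{itemize}
\item Note first that $d_{\max}>0$: if $d_{\max}=0$, then $h\le0$, which forces $K=\mathbf{0}$ and contradicts $g>0$.
\item I would keep $\pi^K$ on stages $0,\dots,N-2$ and modify only the last stage, using $u(N-1)=\theta\,KV_K(N-1)$ with an $\mathcal{F}_{N-1}$-measurable factor $\theta\ge1$.
\item The largest safe choice is $\theta:=C(N-1)/(hV_K(N-1))$. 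This gives $h(u(N-1))=C(N-1)$, so it is robustly safe by Theorem~\ref{thm:robust-drawdown-invariance}.
\item The proof of Proposition~\ref{prop:lti-finite-horizon-drawdown} gives $V_K(N-1)\ge(1-\delta)^{N-1}V_{\max}(N-1)$ on each path. Hence $C(N-1)\ge V_K(N-1)\bigl(1-(1-\delta)\bigr)=\delta V_K(N-1)\ge hV_K(N-1)$, that is, $\theta\ge1$.
\item By the conditional-mean assumption \eqref{eq: conditional-mean assumption}, the expected terminal wealth increases by $\mathbb{E}[(\theta-1)g\,V_K(N-1)]$.
\end{itemize}
Because $g>0$ and $V_K(N-1)>0$, it suffices to show $\mathbb{P}(\theta>1)>0$.

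\emph{Main obstacle.} The hard step is this positivity, i.e.\ that the cushion bound $C(N-1)\ge hV_K(N-1)$ is strict with positive probability. Equality requires the running peak to be $V_0$ and every one of the $N-1\ge1$ earlier stages to realize the worst one-step factor $1-h$; it also requires $h=\delta$. If $h<\delta$, the bound is already strict everywhere. If $h=\delta$, I would use that $g>0$ means $\mathbb{E}[K^\top X(0)]-\varepsilon^\top|K|>-h$. Hence $\mathbb{P}\bigl(K^\top X(0)-\varepsilon^\top|K|>-h\bigr)>0$, and on this event $V_K(1)>(1-h)V_0$.

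On that event I would then need to show $V_K(N-1)>(1-\delta)^{N-1}V_{\max}(N-1)$, splitting into two cases.
\begin{itemize}
\item If the peak stays at $V_0$, the strict loss at stage $0$ propagates through the subsequent factors, each at least $1-h$.
\item If a new peak occurs at some stage $i^*\ge1$, only $N-1-i^*<N-1$ factors follow it, which gives strictness directly.
\end{itemize}
This uses $N\ge2$ exactly once, to guarantee that stage $0$ precedes the final stage. Finally, $\theta$ is a ratio of $\mathcal{F}_{N-1}$-measurable quantities, so the modified policy is causal. Combining this with the weak bound gives $J_0(1)-1>\mathbb{E}[R_K]$.
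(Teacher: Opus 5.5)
Your proposal is correct and follows essentially the same route as the paper. For the weak bound, you realize the LTI action as a robustly safe drawdown-modulated policy and invoke Bellman optimality; for strictness, you replace only the stage-$(N-1)$ action with the full-cushion action $K\,C(N-1)/h_k(K)$ and read off the gain from the conditional mean. The differences are cosmetic: the paper builds an explicit state-feedback $\widetilde\pi$ (using $K/M(k)$ when it lies in $\Gamma$ and $\mathbf{0}$ otherwise) rather than redefining off the reachable set, and it obtains positivity from the event $\{V_K(1)>V_0\}$ together with $(1-h_k(K))^{N-1}>(1-h_k(K))^N\ge 1-d_{\max}$ rather than your case split on $h<\delta$ versus $h=\delta$.
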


\begin{proof}
Fix $d_{\max}\in[0,1)$ and $K\in\mathcal{K}$ satisfying \eqref{ineq:lti-finite-horizon-drawdown}.
Proposition~\ref{prop:lti-finite-horizon-drawdown} indicates that the LTI policy $\pi^K$ satisfies the prescribed drawdown limit
along every return path in~$\mathcal{X}^N$, which implies that the LTI control action~$u_K(k)=KV_K(k)$ is robustly safe at every state reachable under $\pi^K$ at each stage $k=0,\dots,N-1$.

We now use the drawdown-modulation characterization to construct a robustly safe policy with the same performance as the LTI policy above.
Specifically, for any current state~$(V(k), V_{\max}(k)) \in \mathcal{S}_{d_{\max}}$, consider the control action
\[
\widetilde u(k)
:=
M(k)V(k)\widetilde\gamma(k),
\]
where
\[
\widetilde\gamma(k)
:=
\begin{cases}
	K/M(k),
	& M(k)>0 \text{ and } K/M(k)\in\Gamma,\\
	\mathbf{0},
	& \text{otherwise}.
\end{cases}
\]
This state-feedback rule defines a causal policy $\widetilde\pi$.
Since the support set $\mathcal{X}$ is time invariant, it follows that
$\Gamma_k=\Gamma$ for every $k$.
Thus $\widetilde\gamma(k)\in\Gamma_k$, and Theorem~\ref{thm:exact-safe-action-factorization} implies that $\widetilde\pi$ renders~$\mathcal{S}_{d_{\max}}$ robustly invariant.

Now consider any state reachable under $\pi^K$ at a stage~$k<N$.
If $M(k)>0$, the Factorization Theorem~\ref{thm:exact-safe-action-factorization} therefore gives $K/M(k)\in\Gamma$.
If $M(k)=0$, the same theorem implies $KV_K(k)=\mathbf{0}$.
Hence $\widetilde\pi$ reproduces the LTI action at every
such state.
Starting from the same initial account value, induction yields
\[
	V_{\tilde{\pi}}(k)=V_K(k),
	\qquad k=0,\dots,N,
\]
along every return path in $\mathcal{X}^N$.
Bellman optimality in Theorem~\ref{thm:reduced-bellman-recursion} therefore gives
\[
	J_0(1)-1
	=
	\mathbb{E}[R_{\pi^\star}]
	\geq
	\mathbb{E}[R_{\tilde\pi}]
	=
	\mathbb{E}[R_K].
\]

To prove strictness of~\eqref{ineq:weak outperformance}, suppose additionally that $K^\top\mu-\varepsilon^\top|K|>0$, which implies $K\neq\mathbf{0}$.
Since the support set is time invariant, $h_k(K)$
is independent of $k$.
The assumption $h_k(K)>0$ and
\eqref{ineq:lti-finite-horizon-drawdown} imply
\[
0<h_k(K)<1,
\qquad
\bigl(1-h_k(K)\bigr)^N\geq1-d_{\max}.
\]
Define a policy $\bar\pi$ to coincide with $\widetilde\pi$ through stage $N-2$, and set its control action at stage $N-1$ as
\[
u^{\bar\pi}(N-1)
:=
\frac{K}{h_k(K)}C(N-1),
\]
where $C(N-1)=M(N-1)V(N-1)$ is the cushion under $\bar\pi$.
Since $h_k(K) >0$ and $h_k(\cdot)$ is positively homogeneous, we have $h_k\!\left(\frac{K}{h_k(K)}\right)=1$.
Thus, $K/h_k(K)\in\Gamma$, and the modified policy $\bar\pi$ is robustly safe.

Write
$
v:=V_K(N-1),
$
and
$
w:=\max_{0\leq j\leq N-1}V_K(j).
$
By Theorem~\ref{thm:robust-drawdown-invariance},
the robust safety of the LTI action $u=Kv$ at $(v,w)$ gives
\begin{align} \label{ineq: LTI saftey}
h_k(Kv)
\leq
C =
v-(1-d_{\max})w. 
\end{align}
Since $h_k(Kv)=v h_k(K)$ by positive homogeneity, \eqref{ineq: LTI saftey} becomes
\[
\bigl(1-h_k(K)\bigr)v
\geq
(1-d_{\max})w.
\]
Moreover, by assumed condition $K^\top\mu-\varepsilon^\top|K|>0$, we have
\begin{align*}
	\mathbb{E}[V_K(1)]
	&=
	V_0(1+K^\top\mu-\varepsilon^\top|K|)
	>V_0,
\end{align*}
so
$\mathbb{P}(V_K(1)>V_0)>0$.
On this event, a time attaining $w$ lies in
$\{1,\dots,N-1\}$.
Since every LTI wealth multiplier is at least $1-h_k(K)$,
\[
v \geq \bigl(1-h_k(K)\bigr)^{N-2}w,
\]
and
\[
\bigl(1-h_k(K)\bigr)v
\geq
\bigl(1-h_k(K)\bigr)^{N-1}w
>
(1-d_{\max})w.
\]
The policies $\bar\pi$ and $\pi^K$ have the same wealth
history through stage $N-1$. Hence,
\[
V_{\bar\pi}(N-1)=V_K(N-1)=v,
\quad
C(N-1)=v-(1-d_{\max})w.
\]
Since $v$ and $C(N-1)$ are nonnegative and
$\mathcal{F}_{N-1}$-measurable, the account dynamics and conditional mean assumption~\eqref{eq: conditional-mean assumption} give
\begin{align*}
\mathbb{E}\!\left[V_{\bar\pi}(N)\mid\mathcal{F}_{N-1}\right]
&=
v+\frac{C(N-1)}{h_k(K)}
\left(K^\top\mu-\varepsilon^\top|K|\right),\\
\mathbb{E}\!\left[V_K(N)\mid\mathcal{F}_{N-1}\right]
&=
v+v\left(K^\top\mu-\varepsilon^\top|K|\right).
\end{align*}
Subtracting these identities and invoking the tower property of expectations yields
\begin{align*}
&\mathbb{E}[V_{\bar\pi}(N)-V_K(N)]\\
&=
\left(K^\top\mu-\varepsilon^\top|K|\right)
\mathbb{E}\!\left[\frac{C(N-1)}{h_k(K)}-v\right]\\
&=
\left(K^\top\mu-\varepsilon^\top|K|\right)
\mathbb{E}\!\left[\frac{v - (1-d_{\max})w}{h_k(K)}-v\right]\\
&=
\frac{K^\top\mu-\varepsilon^\top|K|}{h_k(K)}
\mathbb{E}\!\left[
\bigl(1-h_k(K)\bigr)v-(1-d_{\max})w
\right]\\
&>0.
\end{align*}
The last inequality follows because the prefactor is
positive, while the bracketed term is nonnegative on every
supported path and, as shown above, strictly positive on
the event $\{V_K(1)>V_0\}$, which has positive probability.
Optimality of $\pi^\star$ proves the strict inequality.
\end{proof}

\section{Numerical Illustration}
\label{sec:numerical-illustration}
This section provides an numerical illustrations for the LTI benchmark with a drawdown-modulated policy computed from the reduced Bellman recursion under a common prescribed drawdown limit $d_{\max}$.

\subsection{Calibration and Setup}

We begin by constructing a joint empirical return distribution from daily adjusted closing prices of \texttt{SPY} and \texttt{TLT} obtained from Yahoo Finance, from January 2015 through July~2026. 
Each of the $2{,}910$ observed daily joint return vectors is assigned equal probability, and their set defines the support $\mathcal{X}$.
Simulation paths are generated by sampling joint return vectors independently across stages from this empirical distribution.

We take $V_0=1$, horizons of $N\in\{21,63,252\}$ trading days, and cost rates
$\varepsilon=5\times10^{-5}\mathbf{1}$, corresponding to $0.5$ basis
points per asset and stage.
Note that the origin lies in the interior of the convex hull of $\mathcal{X}$, implying $h_k(u)>0$ for every $u\neq\mathbf{0}$. 
Hence, the normalized safe-action set $\Gamma_k$ is compact by Lemma~\ref{lemma:normalized-safe-action-geometry}.
For each horizon and each prescribed drawdown limit~$d_{\max}\in\{0,0.05,\dots,0.60\}$, we compute both LTI and drawdown-modulated policies.

\emph{LTI Benchmark.}
The LTI feedback gain $K$ solves~\eqref{problem: LTI optimization problem}, which is equivalent to maximize $K^\top\mu-\varepsilon^\top|K|$ subject to~\eqref{ineq:lti-finite-horizon-drawdown}, where~$\mu$ is the empirical mean return. 
This can be solved by linear programming, and its expected terminal return is computed as~$
\mathbb{E}[R_K]
=
\bigl(1+K^\top\mu-\varepsilon^\top|K|\bigr)^N-1.
$

\emph{Drawdown-Modulated Policy.}
For $d_{\max}>0$, we numerically approximate the Bellman recursion~\eqref{eq:reduced-bellman-recursion}, using feasible normalized actions in $\Gamma_k$.
The expected return is estimated from $300{,}000$ independently simulated paths of length $N$. 
At $d_{\max}=0$, both policies prescribe $u(k)=\mathbf{0}$, so $V(k)=V_0$ throughout the horizon.

\subsection{Performance under a Pathwise Drawdown Limit}

Figure~\ref{fig:numerical-fixed-cap-comparison} shows higher estimated
expected returns for the proposed drawdown-modulated policy than for
the optimized LTI benchmark at every positive evaluated drawdown limit.
For $N=21$ and $N=63$, the numerically computed optimal drawdown modulated policy uses an approximately constant $\gamma$ across stages and states; 
the observed improvement in these panels is therefore already attained by drawdown modulation with constant~$\gamma$.

\begin{figure*}[htbp]
	\centering
	\includegraphics[width=0.95\textwidth]
	{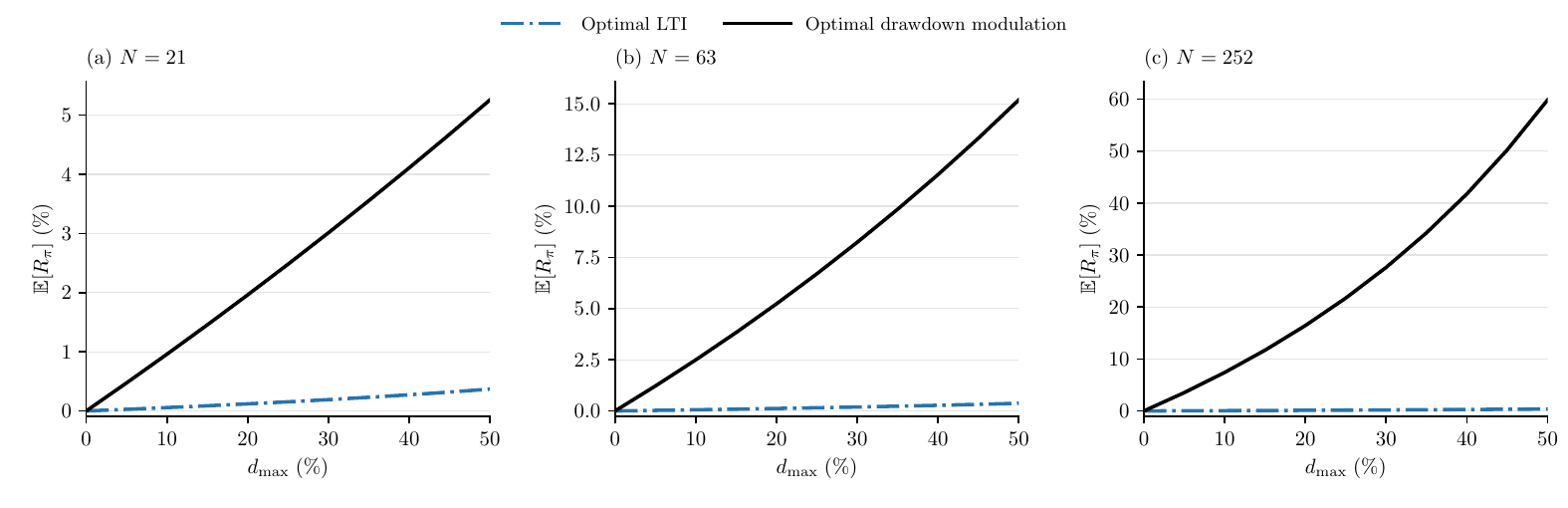}
	\caption{Expected terminal return under a prescribed drawdown
	limit $d_{\max}$ for (a) $N=21$, (b) $N=63$, and (c) $N=252$ trading days.
	}
	\label{fig:numerical-fixed-cap-comparison}
\end{figure*}

\section{Conclusion}
This paper developed a finite-horizon robust-control framework for maximum percentage drawdown in discrete-time stochastic systems. 
By expressing the drawdown constraint as robust invariance of an augmented wealth state, we obtained an exact support-function characterization of robustly safe control actions. 
The safe-action set admits an exact cushion-based factorization that parameterizes all robustly safe controls. 
Under stagewise-independent returns, homogeneity reduces optimal robust drawdown-safe control to a one-dimensional Bellman recursion and yields a globally optimal state-feedback policy. 
Comparison with the LTI benchmark further shows that optimal drawdown modulated policy achieves no lower expected return than any LTI policy satisfying the same prescribed drawdown limit, with strict improvement under explicit sufficient~conditions.


\bibliographystyle{apalike}    
\bibliography{refs}           

@inproceedings{agrawal2017discrete,
  author    = {Agrawal, Ayush and Sreenath, Koushil},
  title     = {{Discrete Control Barrier Functions for
               Safety-Critical Control of Discrete Systems
               with Application to Bipedal Robot Navigation}},
  booktitle = {Proceedings of Robotics: Science and Systems},
  year      = {2017}
}

@book{aliprantis2006infinite,
  title     = {{Infinite Dimensional Analysis: A Hitchhiker's Guide}},
  author    = {Aliprantis, Charalambos D. and Border, Kim C.},
  edition   = {3rd},
  year      = {2006},
  publisher = {Springer-Verlag}
}

@inproceedings{barmish2024jump,
	title={{A Jump Start to Stock Trading Research for the Uninitiated Control Scientist: A Tutorial}},
	author={Barmish, B Ross and Formentin, Simone and Hsieh, Chung-Han and Proskurnikov, Anton V and Warnick, Sean},
	booktitle={Proceedings of the IEEE Conference on Decision and Control (CDC)},
	pages={7441--7457},
	year={2024}
}

@article{ames2017control,
  author  = {Ames, Aaron D. and Xu, Xiangru and
             Grizzle, Jessy W. and Tabuada, Paulo},
  title   = {{Control Barrier Function Based Quadratic Programs
             for Safety Critical Systems}},
  journal = {IEEE Transactions on Automatic Control},
  volume  = {62},
  number  = {8},
  pages   = {3861--3876},
  year    = {2017}
}

@article{anevlavis2024controlled,
  author  = {Anevlavis, Tzanis and Liu, Zexiang and
             Ozay, Necmiye and Tabuada, Paulo},
  title   = {{Controlled Invariant Sets: Implicit Closed-Form
             Representations and Applications}},
  journal = {IEEE Transactions on Automatic Control},
  volume  = {69},
  number  = {7},
  pages   = {4506--4521},
  year    = {2024},
  doi     = {10.1109/TAC.2023.3336819}
}

@article{Barmish_Primbs_2015,
  title     = {{On a New Paradigm for Stock Trading via a Model-Free Feedback Controller}},
  author    = {Barmish, B Ross and Primbs, James A},
  journal   = {IEEE Transactions on Automatic Control},
  volume    = {61},
  number    = {3},
  pages     = {662--676},
  year      = {2015},
  publisher = {IEEE}
}

@book{beck2017first,
  author    = {Beck, Amir},
  title     = {{First-Order Methods in Optimization}},
  publisher = {SIAM},
  address   = {Philadelphia, PA},
  year      = {2017}
}

@article{blanchini1999set,
  author  = {Blanchini, Franco},
  title   = {{Set Invariance in Control}},
  journal = {Automatica},
  volume  = {35},
  number  = {11},
  pages   = {1747--1767},
  year    = {1999}
}

@book{blanchini2015set,
  author    = {Blanchini, Franco and Miani, Stefano},
  title     = {{Set-Theoretic Methods in Control}},
  edition   = {2nd},
  publisher = {Birkh{\"a}user},
  address   = {Cham},
  year      = {2015},
  doi       = {10.1007/978-3-319-17933-9}
}

@incollection{chekhlov2004portfolio,
  title     = {{Portfolio Optimization with Drawdown Constraints}},
  author    = {Chekhlov, Alexei and Uryasev, Stanislav and Zabarankin, Michael},
  booktitle = {Supply Chain and Finance},
  pages     = {209--228},
  year      = {2004},
  publisher = {World Scientific}
}

@article{chekhlov2005drawdown,
  title     = {{Drawdown Measure in Portfolio Optimization}},
  author    = {Chekhlov, Alexei and Uryasev, Stanislav and Zabarankin, Michael},
  journal   = {International Journal of Theoretical and Applied Finance},
  volume    = {8},
  number    = {1},
  pages     = {13--58},
  year      = {2005},
  publisher = {World Scientific}
}

@article{cherny2013portfolio,
  author  = {Cherny, Vladimir and Ob{\l}{\'o}j, Jan},
  title   = {{Portfolio Optimisation under Non-Linear Drawdown Constraints in a Semimartingale Financial Model}},
  journal = {Finance and Stochastics},
  volume  = {17},
  number  = {4},
  pages   = {771--800},
  year    = {2013}
}

@article{comelli2024inner,
  author  = {Comelli, Rom{\'a}n and Olaru, Sorin and
             Kofman, Ernesto},
  title   = {{Inner--Outer Approximation of Robust
             Control Invariant Sets}},
  journal = {Automatica},
  volume  = {159},
  pages   = {111350},
  year    = {2024},
  doi     = {10.1016/j.automatica.2023.111350}
}

@article{cvitanic1994portfolio,
  title   = {{On Portfolio Optimization under  ``Drawdown" Constraints}},
  author  = {Cvitani\'c, Jaksa and Karatzas, Ioannis},
  journal = {IMA Volumes in Mathematics and its Applications},
  volume  = {65},
  pages   = {35--46},
  year    = {1994}
}

@article{grossman1993optimal,
  title     = {{Optimal Investment Strategies for Controlling Drawdowns}},
  author    = {Grossman, Sanford J and Zhou, Zhongquan},
  journal   = {Mathematical Finance},
  volume    = {3},
  number    = {3},
  pages     = {241--276},
  year      = {1993},
  publisher = {Wiley Online Library}
}

@article{hernandez2023portfolio,
  author  = {Hern{\'a}ndez-Bustos, Daniel and Hern{\'a}ndez-Hern{\'a}ndez, Daniel},
  title   = {{Portfolio Management under Drawdown Constraint in Discrete-Time Financial Markets}},
  journal = {Journal of Applied Probability},
  volume  = {60},
  number  = {1},
  year    = {2023}
}

@inproceedings{hsieh2016kelly,
  title     = {{Kelly Betting Can be Too Conservative}},
  author    = {Hsieh, Chung-Han and Barmish, B Ross and Gubner, John A},
  booktitle = {Proceedings of the IEEE Conference on Decision and Control (CDC)},
  pages     = {3695--3701},
  year      = {2016}
}

@article{hsieh2017drawdown,
  title     = {{On Drawdown-Modulated Feedback Control in Stock Trading}},
  author    = {Hsieh, Chung-Han and Barmish, B Ross},
  journal   = {IFAC-PapersOnLine},
  volume    = {50},
  number    = {1},
  pages     = {952--958},
  year      = {2017},
  publisher = {Elsevier}
}

@inproceedings{hsieh2017inefficiency,
  title     = {{On Inefficiency of Markowitz-Style Investment Strategies When Drawdown is Important}},
  author    = {Hsieh, Chung-Han and Barmish, B Ross},
  booktitle = {Proceedings of the IEEE Conference on Decision and Control (CDC)},
  pages     = {3075--3080},
  year      = {2017}
}

@article{hsieh2023data,
  author  = {Hsieh, Chung-Han},
  title   = {{On Data-Driven Drawdown Control with Restart Mechanism in Trading}},
  journal = {IFAC-PapersOnLine},
  volume  = {56},
  number  = {2},
  year    = {2023}
}

@article{Ismail_2004,
  title     = {{On the Maximum Drawdown of a Brownian Motion}},
  author    = {Magdon-Ismail, Malik and Atiya, Amir F and Pratap, Amrit and Abu-Mostafa, Yaser S},
  journal   = {Journal of Applied Probability},
  volume    = {41},
  number    = {1},
  pages     = {147--161},
  year      = {2004},
  publisher = {Cambridge University Press}
}

@article{calafiore2008multi,
  title={{Multi-Period Portfolio Optimization with Linear Control Policies}},
  author={Calafiore, Giuseppe Carlo},
  journal={Automatica},
  volume={44},
  number={10},
  pages={2463--2473},
  year={2008},
  publisher={Elsevier}
}

@article{calafiore2013direct,
  title={{Direct Data-Driven Portfolio Optimization with Guaranteed Shortfall Probability}},
  author={Calafiore, Giuseppe Carlo},
  journal={Automatica},
  volume={49},
  number={2},
  pages={370--380},
  year={2013},
  publisher={Elsevier}
}

@book{maclean2011kelly,
  title={{The Kelly Capital Growth Investment Criterion: Theory and Practice}},
  author={MacLean, Leonard C and Thorp, Edward O and Ziemba, William T},
  volume={3},
  year={2011},
  publisher={World Scientific}
}

@article{hsieh2023asymptotic,
  title={{On Asymptotic Log-Optimal Portfolio Optimization}},
  author={Hsieh, Chung-Han},
  journal={Automatica},
  volume={151},
  pages={110901},
  year={2023},
  publisher={Elsevier}
}

@article{kardaras2017numeraire,
  author  = {Kardaras, Constantinos and Ob{\l}{\'o}j, Jan and Platen, Eckhard},
  title   = {{The Num\'eraire Property and Long-Term Growth Optimality for Drawdown-Constrained Investments}},
  journal = {Mathematical Finance},
  volume  = {27},
  number  = {1},
  pages   = {68--95},
  year    = {2017}
}

@incollection{Kelly_1956,
  title     = {{A New Interpretation of Information Rate}},
  author    = {Kelly Jr, John L},
  booktitle = {The Kelly Capital Growth Investment Criterion: Theory and Practice},
  pages     = {25--34},
  year      = {2011},
  publisher = {World Scientific}
}

@article{klass2005grossman,
  author  = {Klass, Michael J. and Nowicki, Krzysztof},
  title   = {{The {G}rossman and {Z}hou Investment Strategy Is Not Always Optimal}},
  journal = {Statistics \& Probability Letters},
  volume  = {74},
  number  = {3},
  pages   = {245--252},
  year    = {2005}
}

@article{konno1991mean,
  title={{Mean-Absolute Deviation Portfolio Optimization Model and its Applications to Tokyo Stock Market}},
  author={Konno, Hiroshi and Yamazaki, Hiroaki},
  journal={Management Science},
  volume={37},
  number={5},
  pages={519--531},
  year={1991},
  publisher={INFORMS}
}

@book{luenberger2013investment,
  author    = {Luenberger, David G.},
  title     = {{Investment Science}},
  edition   = {2nd},
  publisher = {Oxford University Press},
  year      = {2013},
  address   = {New York, NY},
  isbn      = {978-0-19-974008-6}
}

@inproceedings{Malekpour_Barmish_2012,
  title     = {{How Useful are Mean-Variance Considerations in Stock Trading via Feedback Control?}},
  author    = {Malekpour, Shirzad and Barmish, B Ross},
  booktitle = {Proccedings of the IEEE Conference on Decision and Control (CDC)},
  pages     = {2110--2115},
  year      = {2012}
}

@article{Markowitz_1952,
  title   = {{Portfolio Selection}},
  author  = {Markowitz, Harry M},
  journal = {The Journal of Finance},
  volume  = {7},
  pages   = {77--91},
  year    = {1952}
}

@article{nystrup2019multi,
  title     = {{Multi-Period Portfolio Selection with Drawdown Control}},
  author    = {Nystrup, Peter and Boyd, Stephen and Lindstr{\"o}m, Erik and Madsen, Henrik},
  journal   = {Annals of Operations Research},
  volume    = {282},
  number    = {1},
  pages     = {245--271},
  year      = {2019},
  publisher = {Springer}
}

\end{document}